\documentclass[12pt,a4paper,english,reqno]{amsart}
\usepackage[latin9]{inputenc}
\usepackage{amsthm}
\usepackage{amssymb}
\usepackage{tikz}
\usepackage{setspace}
\usetikzlibrary{arrows,chains,matrix,positioning,scopes}
\usepackage{babel}
\usepackage{babel}\makeatletter
\tikzset{join/.code=\tikzset{after node path={%
\ifx\tikzchainprevious\pgfutil@empty\else(\tikzchainprevious)%
edge[every join]#1(\tikzchaincurrent)\fi}}}
\makeatother

\tikzset{>=stealth',every on chain/.append style={join},
         every join/.style={->}}

\makeatletter

\pdfpageheight\paperheight
\pdfpagewidth\paperwidth

\numberwithin{equation}{section}
\numberwithin{figure}{section}

\usepackage{amsfonts}

\usepackage{amsthm}     
\usepackage{amscd}      
\usepackage{euscript}
\usepackage[matrix,arrow,curve]{xy}
\usepackage{pb-diagram}
\usepackage{enumitem}
\usepackage{hyperref}
\usepackage{amsmath}

{
       \newtheorem{theorem}{Theorem}[section]
       \newtheorem{proposition}[theorem]{Proposition}
       \newtheorem{lemma}[theorem]{Lemma}

{\theoremstyle{definition}

       \newtheorem{definition}[theorem]{Definition}
       
}

\newcommand{\p}{\partial}

\newcommand{\bC}{\mathbb{C}}

\newcommand{\mc}[1]{\mathcal{#1}_{2,3}}
\newcommand{\cp}{\mathbb{CP}^1}
\newcommand{\Ms}{\mathcal{M}_{2,3}^{\text{small}}}
\newcommand{\Mb}{\mathcal{M}_{2,3}^{\text{big}}}
\newcommand{\norm}[1]{\left\lVert#1\right\rVert}

\makeatother

\begin{document}
\begin{onehalfspacing}

\title{Regulated $L^2$ Norm on Certain Wild Higgs Bundles Over $\mathbb{CP}^1$}
\author{Hsiao-Tzu Tsai}\address{Department of Mathematics, National Cheng-Kung University, Tainan, Taiwan\\l18121014@gs.ncku.edu.tw}
\author{Chih-Chung Liu}\address{Department of Mathematics, National Cheng-Kung University, Tainan, Taiwan\\cliu@mail.ncku.edu.tw}
\maketitle

\begin{abstract}
  We define and analyse certain $L^2$ norm on moduli space of Higgs bundles over $\mathbb{CP}^1$ with a particular type of singularity. We prove that certain limit of our metrics here is the regulated $L^2$ norm defined on the central fiber, first appearing in \cite{FN}.
\end{abstract}

\section{introduction}
The moduli space $\mathcal{M}$ of rank $k$ stable Higgs bundles, or equivalently the Hitchin moduli spaces, over a compact Riemann surface $C$ with genus $g$, has been studied extensively from various points of view. On the Higgs bundle side, we have the space consists of {\em Higgs pairs} $(\mathcal{E},\varphi)$, where $\mathcal{E}$ is a rank $k$ holomorphic bundle with gauge group $\mathcal{G}$ and $\varphi$ is a holomorphic section of $End \mathcal{E} \; \otimes K_X$. Here, $K_X$ is the holomorphic cotangent bundle of $X$ (or the canonical bundle). The space, via Hitchin-Kobayashi correspondence, corresponds to the space of special metrics known as the {\em harmonic metrics}. Such metrics, $h$, enable the pair $(\mathcal{E},\varphi)$ to satisfy the {\em Hitchin equation}:

\begin{equation}
  F_h + [\varphi,\varphi^{*_h}]=0.
  \label{Hitchin Equation}
\end{equation}

\noindent Here, $F_h$ is the curvature of the Chern connection determined by $h$ and the holomorphic structure of $\mathcal{E}$. This is a gauge-theoretic equation that may be obtained by dimensional reduction of classical Yang-Mills equation to Riemann surfaces.

A well known structure for these spaces are the {\em Hitchin fibrations}, where $\mathcal{M}$ fibers over the vector space $\mathcal{B}$ of invariant polynomials. Such a structure led to the description of Higgs pairs by spectral curves, which may be thought of as the section of eigenvalues of $\varphi$. The Hitchin fiber over a polynomial is then parametrized by certain line bundles over these curves. These fibers are generically toric, such as the full Jacobian varieties for $\mathcal{G}=GL(k,\bC)$ and the Prym varieties for $\mathcal{G}=SL(k,\bC)$. Hitchin has further shown that the fibration is algebraically completely integrable.

Based on these structures, many profound and delicate analytic results for $k=2$ and $\mathcal{G}=SL(2,\bC)$ have appeared in literature in the past decade (cf. \cite{MSWW},\cite{MSWW1}, \cite{DN}, ...etc ). Most of these results assume the genus $g \geq 2$, and that the spectral curve is smooth. The latter is equivalent to the simpleness of zeros of characteristic polynomial. That is, the polynomial can be written as

\begin{equation}
\lambda^2 - z dz^2
\label{char poly smooth}
\end{equation}

\noindent near its zeros.

These two assumptions make the moduli spaces smooth. However, they somewhat limit the possibilities for explicit computations and examples. For these purposes, some singularities are considered to construct interesting examples. In this paper, we study rank 2 the Higgs bundles over $\mathbb{CP}^1$ wit irregular (wild) singularity at $\infty$, introduced in \cite{FN} and \cite{BP}. The Higgs field $\varphi$ we consider have characteristic polynomial behaving as

\begin{equation}
  \lambda^2 - z^K dz^2
  \label{char poly infty}
\end{equation}

\noindent near $\infty$, for some odd integer $K$. The singularity at $\infty$ is accounted for by considering {\em filtered} bundles with {\em parabolic structures}. The moduli space of these pairs is denoted by $\mathcal{M}_{2,N}$. In \cite{FN}, it was shown that the filtered bundles are decomposed into 2 parabolic line bundles, and the moduli space is stratified according to certain partitions of $N$.

In this article, we focus on the case $N=3$, explicitly described in \cite{FN}.  The space $\mc{M}$ was decomposed into disjoint union of two sets, $\mc{M}^{\text{small}}$ and $\mc{M}^{\text{big}}$. They are in correspondence with $\mathbb{C}$ and $\mathbb{C}^2$, respectively. There are two fixed points, one from each stratum, sitting at the {\em central fiber} of the Hichin fibration. The other fibers consist of $\mc{M}^{\text{big}}$ and a point from  $\mc{M}^{\text{small}}$ being the point of infinity. More details from \cite{FN} will be provided in section 2.

 An important construction there includes a $\mathbb{C}^*$ action on $\mc{M}$  that generalized the classical action defined by Hitchin. At the fixed points of this action, the authors defined the {\em regulated $L^2$ norm} in terms of Higgs pairs and their corresponding harmonic metrics with explicit computation for the case $\mc{M}$. Our first result is to extend the norm to the whole of $\mc{M}$.

\begin{theorem}
  The regulated $L^2$ norm $\mu$ can be extended to a norm on the entire $\mc{M}$.
  \label{Extend Metric intro}
\end{theorem}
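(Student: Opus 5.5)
We first recall how the regulated $L^2$ norm of \cite{FN} is built at a $\mathbb{C}^*$-fixed point $(\mathcal{E},\varphi)$ of $\mc{M}$, with harmonic metric $h$. The integrand $|\varphi|^2_h$ is not integrable near $\infty$, because $\det\varphi\sim z^3dz^2$. It is regulated by subtracting the corresponding density of a model solution $(\varphi_0,h_0)$. This model is the explicit diagonal (semi-flat) solution with the same spectral data at $\infty$, built from the eigenvalues $\pm z^{3/2}dz$. One sets
\begin{equation*}
\mu(\mathcal{E},\varphi)=\lim_{R\to\infty}\Bigl(\int_{|z|<R}|\varphi|_h^2-\int_{|z|<R}|\varphi_0|_{h_0}^2\Bigr).
\end{equation*}
We take this same formula as the \emph{definition} of $\mu$ on an arbitrary point of $\mc{M}$. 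Theorem \ref{Extend Metric intro} then reduces to two claims. First, the limit exists and is finite at every point. Second, it is independent of the choices involved: the gauge representative, the choice of model in the same framing, and the radial exhaustion.

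The key steps are as follows. (1) Use the description of \cite{FN} recalled in Section 2. Every point of $\mc{M}^{\text{big}}\cong\bC^2$ and of $\mc{M}^{\text{small}}\cong\bC$ is given by an explicit Higgs field on $\cp$ with characteristic polynomial $\lambda^2-(z^3+az+b)dz^2$. The leading singular behaviour at $\infty$ is the same for all points; the lower-order terms $a,b$ and the parabolic weights are fixed by the filtered structure. (2) Prove an asymptotic estimate at $\infty$ for the harmonic metric $h$ of a general point. Away from the zeros of $\det\varphi$, $h$ is exponentially close to the semi-flat metric $h_{\mathrm{sf}}$ built from the spectral cover: in a frame diagonalizing $\varphi$,
\begin{equation*}
h=h_{\mathrm{sf}}e^{\gamma},\qquad |\gamma|\le Ce^{-c|z|^{5/2}}.
\end{equation*}
We will obtain this by the standard maximum-principle argument for the scalar equation $\Delta\gamma=8|\det\varphi|\sinh\gamma$ (cf.\ \cite{MSWW}), using comparison with exponential barriers on the region $|z|>R_0$. (3) Compare the semi-flat density for $\varphi$ with the model density. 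We have $|\varphi|^2_{h_{\mathrm{sf}}}=2|\det\varphi|$ up to normalization. Hence the difference from the model is $2(|z^3+az+b|-|z|^3)|dz|^2$. Expanding,
\begin{equation*}
|z^3+az+b|-|z|^3=\mathrm{Re}\bigl(a\bar z^{-2}\cdots\bigr)|z|^3+O(|z|^{-1}).
\end{equation*}
The term $\mathrm{Re}(a z/z^{3}\cdot\ldots)$ is of order $|z|$ times an oscillatory factor $e^{\pm 2i\theta}$, so it integrates to zero over circles. The remainder is $O(|z|^{-1})$, and this is borderline. We will handle it by matching the model more carefully, including the subleading coefficient in $\varphi_0$, or by noting that the $|z|^{-1}$ term is again angular-oscillatory. (4) Conclude that the limit exists. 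It is independent of gauge because the integrand is gauge invariant. Continuity in $(a,b)$ follows from uniformity of the constants in (2) on compact sets. Finally, check that $\mu$ agrees with \cite{FN} at the two fixed points.

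The main obstacle is step (3) together with the uniformity in step (2). The exponential decay of $\gamma$ does not control the contribution near the zeros of $z^3+az+b$. These zeros may move off to a bounded but arbitrary region, and may collide on the discriminant locus, where the spectral curve is singular. The first approach to try is to split the integral as $|z|<R_0$ plus $|z|>R_0$. On the compact part, $h$ is smooth and finite, so that integral converges. On the exterior part, the decay estimate is used. The point is to choose $R_0$ depending only on $|a|+|b|$. For the polynomial-order terms in the difference density, I will try to show that every non-integrable term has nonzero angular frequency, so that it cancels on circles. If that fails, I will instead replace the model by one carrying the same subleading data.
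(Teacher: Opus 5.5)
Your fixed-model route is viable for $\mc{M}$, but as written it has a genuine gap. Step (3) decides convergence, you leave it unresolved, and your analysis of it rests on a wrong description of the Hitchin base.

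\textbf{The base has no linear term.} In $\mc{M}$ the characteristic polynomial is $\lambda^2-(z^3+u)\,dz^2$, with the single parameter $u$. The degree bound recalled in Section 2 gives $\deg P_2\le \frac{3}{2}-1=\frac12$, which forces the coefficient of $z$ to vanish. The second complex dimension of $\Mb\cong\bC^2$ is the fibre $\gamma^2-\omega^3=u$, not a second base coordinate.

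\textbf{Why this matters.} If $a\neq0$ were allowed, your definition would simply diverge. The second-order term of $|z^3+az+b|-|z|^3$ is $\frac{|a|^2}{2}|z|^{-1}\sin^2(\arg a-2\theta)$. Its circle average is $\frac{|a|^2}{4}|z|^{-1}\neq 0$, so the regulated integral grows linearly in $R$. An $O(|z|^{-1})$ density in real dimension two is not borderline, and this one is not oscillatory.

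\textbf{The missing step.} Since $a=0$ here,
\[
|z^3+u|-|z|^3=\frac{\mathrm{Re}(\bar u z^3)}{|z|^3}+O(|z|^{-3}).
\]
The leading term has angular frequency $3$, so it integrates to zero on every circle. The remainder is integrable at $\infty$. This, together with step (2), is what makes your radial limit exist.

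\textbf{Comparison with the paper.} The paper avoids conditional convergence by subtracting $\mathrm{Id}\,|z^3+u|$, the semi-flat density of the point itself; this is your fallback. On $\Ms$ the integrand then becomes $4|z^3+u|\sinh^2\psi_u$, which is absolutely integrable by the exponential decay of $\psi_u$. The two recipes give different extensions. They differ by $2\lim_R\int_{|z|<R}(|z^3+u|-|z|^3)$, up to the paper's normalization. Rescaling $z$ shows this is a constant times $|u|^{5/3}$. The constant is positive, because $|1+w|$ is subharmonic, so the circle averages of the difference are nonnegative and strictly positive for $|z|^3<|u|$. Hence the two extensions agree at $u=0$ but not elsewhere on $\Ms$.

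\textbf{Step (2) does not reach all of $\Mb$.} The scalar equation $\Delta\gamma=8|\det\varphi|\sinh\gamma$ and your barrier argument apply only where the harmonic metric is diagonal in the frame in which $\varphi$ is written. That is the case on $\Ms$ and on the slice $\gamma=0$ of $\Mb$; here $\gamma$ is the paper's coordinate, not your deviation. For $\gamma\neq0$ the paper states that the Hitchin equation cannot be reduced to a scalar PDE. Moreover, on $|z|>R_0$ there is no global eigenframe, since the eigenvalues $\pm(z^3+u)^{1/2}dz$ are interchanged around $\infty$.

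What you need there is a matrix-valued decay statement for the gauge-invariant quantity $|\varphi|_h^2-2|\det\varphi|$. For traceless $2\times 2$ matrices this quantity is at most $|[\varphi,\varphi^{*_h}]|_h^2/(8|\det\varphi|)$. So an exponential decay estimate for $|[\varphi,\varphi^{*_h}]|_h$ away from the zeros of $\det\varphi$, of the kind proved by Simpson and Mochizuki, would suffice. The paper does not prove such an estimate either. For $\gamma\neq0$ it instead builds the explicit subtraction term $E_\gamma$ into the definition of $\mu$.
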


\noindent A key step to the extension is the construction of harmonic metrics at all points of $\mc{M}$. We constructed the metrics in the way that the equations for them are similar to the ones considered at fixed point (Toda type equations). In fact, the equations are some smooth perturbations of the ones at fixed point. Elliptic estimates and other techniques in PDE's allow us to show

\begin{theorem}
  The extended norm $\mu$ is smooth on $\mc{M}$.
  \label{Smoothness Metric intro}
\end{theorem}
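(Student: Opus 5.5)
The plan is to write $\mu$ explicitly as a function of the parameters of $\mc{M}$ through the harmonic metrics of Theorem \ref{Extend Metric intro}, and then show that everything entering this expression depends smoothly on those parameters. Locally on $\Mb\cong\bC^2$ (and on $\Ms\cong\bC$) I use coordinates $t$, for instance the coefficients of the characteristic polynomial together with the fiber coordinate. In these coordinates the Higgs pair $(\mathcal{E}_t,\varphi_t)$ is given by explicit matrices whose entries are polynomial in $z$ and $t$. As in the construction behind Theorem \ref{Extend Metric intro}, I write the harmonic metric as $h_t=\mathrm{diag}(e^{u_t},e^{-u_t})$, or as a perturbation of such a diagonal metric. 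Hitchin's equation \eqref{Hitchin Equation} then becomes a Toda-type scalar equation
\[
\Delta u_t = F(z,t,u_t),
\]
where $F$ depends smoothly, in fact real-analytically, on $(t,u)$, and $F(z,t_0,\cdot)$ is the equation at the fixed point. The regulated norm is then $\mu(t)=\int_{\cp}\bigl(|\varphi_t|^2_{h_t}-R_t\bigr)$, where the regulator $R_t$ is an explicit function that cancels the divergent part near $\infty$ fixed by the $z^K$ asymptotics. So it suffices to prove two things: that $t\mapsto u_t$ is smooth into a suitable weighted function space, and that the regularized integrand is differentiable in $t$ under the integral sign.

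The first step is to set up weighted Banach spaces near $\infty$. Write $u_t=u^{\mathrm{model}}_t+v_t$, where $u^{\mathrm{model}}_t$ is the explicit model solution near the wild singularity (depending smoothly on $t$) and $v_t$ decays at a prescribed rate. The space is $v\in r^{-\delta}C^{2,\alpha}_b$, or its Sobolev analogue, in a polar coordinate at $\infty$. Rewriting the equation as $\mathcal{N}(t,v)=0$ gives a smooth map between these Banach spaces. The second step is to apply the implicit function theorem. The linearization is $D_v\mathcal{N}=\Delta-\partial_uF(z,t,u_t)$. Because the Toda nonlinearity is monotone (it is built from exponentials $e^{2u}|\cdot|^2$ with positive coefficients), the potential $\partial_uF$ is nonnegative and grows at $\infty$. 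The maximum principle then gives injectivity, and standard weighted elliptic estimates together with Fredholm theory for such operators give an isomorphism. This makes $t\mapsto v_t$ smooth, and by uniqueness of harmonic metrics it agrees with the metric already constructed. The third step is to differentiate $\mu$. The $t$-derivatives of $|\varphi_t|^2_{h_t}-R_t$ are controlled by the decay of $\partial_t^k v_t$ and of $\partial_t^k(\text{model}-R_t)$, which is integrable near $\infty$. Dominated convergence then gives $\mu\in C^\infty$ on each stratum.

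The main obstacle I expect is uniformity near the boundary between strata, and at the fixed points where the Higgs field degenerates, for example where zeros of the characteristic polynomial collide. There the decay rate of the model, or the positivity of the potential $\partial_uF$, may fail to be uniform in $t$. A related difficulty is that the regulator must be chosen to depend smoothly on $t$. My first attempt would be to choose the model and the regulator using only the leading $z^K$ behavior at $\infty$, which is independent of $t$, so that all $t$-dependence sits in compactly supported or rapidly decaying terms. I would then prove the invertibility of the linearization directly at the fixed point, where the explicit solution from \cite{FN} is available, and propagate it to nearby $t$ by openness. This reduces smoothness everywhere to the smooth-perturbation structure described in the introduction.
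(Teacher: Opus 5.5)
Your Steps 1--2 are a more careful version of what the paper does. The paper uses Schauder estimates, Arzel\`a--Ascoli and uniqueness to get $\psi_u\to\psi$, then asserts on $\Ms$ that ``the integrand is clearly smooth in $u$'', and on $\Mb$ it uses a matrix linearization $h_\gamma=h_0e^{\eta_\gamma}$. There are, however, two genuine gaps.

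\textbf{Step 3 breaks at the zeros of $z^3+u$, irreparably at $u=0$.} You check domination only at $\infty$. The regulator defining $\mu$ on $\Ms$ is $2|z^3+u|$, and $\partial_u\partial_{\bar u}|z^3+u|=1/(4|z^3+u|)$. At $u=0$ this is $|z|^{-3}/4$, which is not locally integrable in the plane. For $u\neq0$ the roots are simple; third derivatives $\sim|z-z_j|^{-2}$ already defeat naive domination, but translating to the moving roots repairs that.

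At $u=0$ the problem is not a technicality. Write $h_u=\mathrm{diag}(e^{v_u},e^{-v_u})$ with $v_u=\tfrac12\log|z^3+u|+\psi_u$ smooth. Then $\Delta v_u=e^{2v_u}-|z^3+u|^2e^{-2v_u}$ has coefficients polynomial in $u,\bar u$ and a positive linearized potential. So, granting your Step 2, both $\int_{|z|\le R}\bigl(e^{2v_u}+|z^3+u|^2e^{-2v_u}\bigr)\,dA$ and the tail over $|z|>R$ are smooth in $u$. But the substitution $z=|u|^{1/3}s$ gives $\int_{|z|\le R}|z^3+u|\,dA=S_R(u)+C|u|^{5/3}$, with $S_R$ smooth and $C=\lim_{\rho\to\infty}\int_{|s|\le\rho}\bigl(|s^3+1|-|s|^3\bigr)\,dA(s)$. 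This $C$ is positive because the mean of $|w+1|$ over each circle $|w|=r$ exceeds $r$. Hence $\mu(u)=(\text{smooth})-c|u|^{5/3}$ with $c\neq0$, and $\mu$ is not $C^2$ at the central fiber. With the paper's regulator the statement fails there, and the paper's ``clearly smooth'' passes over exactly this point.

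You anticipated trouble where roots collide, but you placed it in the linearization, whose potential $2(e^{2v}+|z^3+u|^2e^{-2v})$ is uniformly positive. Your remedy of a $u$-independent regulator $2|z|^3$ does not confine the $u$-dependence to decaying terms. Indeed $2|z^3+u|-2|z|^3\approx2\,\mathrm{Re}(u\bar z^3)/|z|^3$ is $O(1)$ at $\infty$, so absolute convergence is lost. Over discs the integral does converge, to $\mu$ plus a multiple of $C|u|^{5/3}$. That sum is smooth, but it is a different function from the $\mu$ of the theorem.

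\textbf{No scalar reduction on $\Mb$ for $\gamma\neq0$.} For $h=\mathrm{diag}(a,a^{-1})$ the curvature is diagonal. With the paper's convention for $*_h$, the $(1,2)$ entry of $[\varphi_\gamma,\varphi_\gamma^{*_h}]$ is $2(\gamma a^2\bar Q_\gamma-\bar\gamma P_\gamma)$. This cannot vanish for $\gamma\neq0$, since $P_\gamma/\bar Q_\gamma$ has non-constant argument. So $h_\gamma$ is genuinely non-diagonal, as the paper says, and there is no equation $\Delta u_t=F(z,t,u_t)$.

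Your injectivity argument (monotone scalar nonlinearity plus maximum principle) must therefore be replaced by the integration-by-parts positivity of the linearized Hitchin operator on Hermitian endomorphisms. You would also need an asymptotic model at $\infty$ for the full matrix system. The proposal supplies neither, so it does not cover the open set $\{\gamma\neq0\}\subset\Mb$.
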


\noindent The extended norm, in particular, defines a moment map for the $U(1)$-action on $\Ms$ and a small part of $\Mb$. In spite of the achieved extension, the definition on $\Mb$ is still somewhat vague. We are therefore unable to fully answer the conjecture posed in \cite{FN} about the precise description on the monotonicities of $\mu$. We leave it as continuing direction toward other generalizations in the future. 

\section{preliminaries}
While the scope of this paper is to analyze a specific example, we nevertheless list some relevant definitions of filtered bundles and parabolic structures from \cite{FN} for the convenience of the readers. It is a standard fact that the structures of holomorphic vector bundles are characterized by the zeros of global sections. Filtered bundles are defined and characterized, in addition, by global sections with poles. The sheaf of these sections are described in terms of {\em filtration} of modules.

\begin{definition}[Filtered Bundles (\cite{FN})]
  Given a compact Riemann surface $C$ and a divisor $D \subset C$ of finite set of points, a {\em filtered rank $K$-bundle} on $(C,D)$ is a locally free $\mathcal{O}_C(*D)$-module $\mathcal{E}$ of rank $K$, with an increasing filtration by locally free $\mathcal{O}_C$-submodules $\{\mathcal{P}_\alpha \mathcal{E}\}_{\alpha \in \mathbb{R}}$ such that
  \begin{itemize}
    \item $\mathcal{P}_\alpha \mathcal{E}_{C - D} = \mathcal{E}_{C-D}.$
    \item $\mathcal{P}_\alpha \mathcal{E} = \cap_{\beta > \alpha} \mathcal{P}_\beta \mathcal{E}.$
    \item If $x$ is a local coordinate on an open set $U$ around $p \in D$, then 
    \begin{equation} 
    \mathcal{P}_\alpha \mathcal{E}|U = \mathcal{P}_{\alpha-1} \mathcal{E}|U.
    \label{definition of filtered sheaves}
    \end{equation}
  \end{itemize}
  \label{Filtered Bundles}
\end{definition}

\noindent These $\mathcal{P}_\alpha$'s are essentially sheaves of meromorphic sections with various order of poles. From this principle, we have

\begin{definition}
  For $p \notin D$, we define the order of $s$ at $p$, or $\nu_p(s)$, to be the ordinary pole order. For $p \in D$, open set $U \subset C$ such that $U \cap D = \{p\}$, and a section $s$ of $\mathcal{E}$ over $U$, we define the {\em order} of $s$ at $p$ to be

  \begin{equation}
    \nu_p(s) := \inf \{\alpha\;|\;s \in \mathcal{P}_\alpha \mathcal{E}\}.
    \label{order of a section}
  \end{equation}
\end{definition}

Toward our specific example, we have

\begin{definition}
A {\em filtered $SL(K)$-Higgs Bundle} on $(C,D)$ is a pair $(\mathcal{E},\varphi)$, where $\mathcal{E}$ is a filtered $SL(K)$ bundle on $(C,D)$, and $\varphi$ is a traceless endomorphism (or section of $\mathcal{E}^* \otimes \mathcal{E}$), holomorphic on $C-D$.
\label{Filtered Higgs Bundles}
\end{definition}

The degree of a filtered bundles, similar to ordinary ones, are defined by counting zeros and poles of global sections.

\begin{definition}
  For a filtered {\em line} bundle $\mathcal{L}$ over $(C,D)$, we define its {\em parabolic degree} by
  \begin{equation}
    p\deg \mathcal{L} := -\sum_{p \in C} \nu_p(s),
    \label{parabolic degree of line bunle}
  \end{equation}
\noindent where $s$ is any global meromorphic section. For a rank $K$ filtered bundle $\mathcal{E}$, we define

\begin{equation}
  p\deg \mathcal{E} := p\deg \wedge^K \mathcal{E}.
  \label{parabolic degree of vector bundle}
\end{equation}

\end{definition}

\noindent It is a standard fact form complex geometry that the sum above is well defined. That is, the order is zero except at finitely many points, and the sum is independent of the choice of sections.

In this article, we are interested in the case for $C = \cp$ and $D = \{\infty\}$, where filtered line bundles naturally generalize the standard holomorphic line bundles on $\cp$.

\begin{definition}
  For each $\alpha \in \mathbb{R}$, we define the filtered line bundle $\mathcal{O}(\alpha)$ over $(\cp,\{\infty\})$ to be the increasing filtration of $\mathcal{O}_{\cp}(*\{\infty\})-$modules with pole order at $\infty $ no greater than $\alpha$, filtered by pole order shifted by $-\alpha$.
\end{definition}

\noindent An important fact, proved in \cite{FN} analogously to \cite{Hu}, is

\begin{lemma}
  A filtered $SL(K)$-bundle $\mathcal{E}$ over $(\cp,\{\infty\})$ can be decomposed into

  \begin{equation}
    \mathcal{E} = \bigoplus_{i=1}^K \mathcal{O}(\alpha_i),
    \label{Decomposition of filtered bundle}
  \end{equation}
\noindent where $\sum \alpha_i =0$.
\end{lemma}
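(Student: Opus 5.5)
We prove the lemma in the spirit of Grothendieck's splitting theorem on $\cp$, following the argument of \cite{Hu}: reduce to ordinary vector bundles, split those, and then show that the parabolic data at $\infty$ can be made compatible with the splitting. The constraint $\sum\alpha_i=0$ is forced at the end by the $SL(K)$ condition.

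\textbf{Step 1: reduce to an ordinary bundle with a flag at $\infty$.} Fix $\alpha_0\in\mathbb{R}$. Since $\mathcal{P}_{\alpha_0}\mathcal{E}$ is a locally free $\mathcal{O}_{\cp}$-module of rank $K$, it is an ordinary holomorphic vector bundle $E$. The third condition in Definition \ref{Filtered Bundles} (with $x=1/z$) gives $\mathcal{P}_{\alpha-1}\mathcal{E}=x\,\mathcal{P}_\alpha\mathcal{E}$ near $\infty$. Hence the whole filtration is determined by the finitely many jumps in the window $(\alpha_0-1,\alpha_0]$. These jumps form a decreasing flag
\[
E_\infty = F_0 \supset F_1 \supset \cdots \supset F_m = 0
\]
in the fiber $E_\infty=\mathcal{P}_{\alpha_0}\mathcal{E}/\mathcal{P}_{\alpha_0-1}\mathcal{E}$, with associated weights.

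\textbf{Step 2: split $E$ compatibly with the flag.} By Grothendieck's theorem, $E\cong\bigoplus_i\mathcal{O}(d_i)$. We need a splitting whose summands, restricted to $\infty$, are adapted to the flag, i.e.\ each $F_j$ is spanned by the fibers of some of the summands. We choose basis sections inductively, starting from the summands of highest degree.
\begin{itemize}
\item The automorphism group of $\bigoplus\mathcal{O}(d_i)$ contains block upper-triangular maps whose entries are sections of $\mathcal{O}(d_j-d_i)$ for $d_j\ge d_i$.
\item Evaluation at $\infty$ maps these automorphisms onto a parabolic subgroup $P\subset GL(E_\infty)$.
\item By the Bruhat decomposition, any flag can be moved by $P$ into a position in relative Weyl position with a standard coordinate flag. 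That is, there is a basis of $E_\infty$ adapted simultaneously to the degree filtration and to $F_\bullet$.
\end{itemize}
Lifting this $P$-element to a global automorphism of $E$ gives the desired adapted splitting. This is the step I expect to be the main obstacle. One has to check that the evaluation map from $\mathrm{Aut}(E)$ to $P$ is surjective, which holds because on $\cp$ the sections of $\mathcal{O}(n)$ for $n\ge0$ surject onto the fiber at a point. One also has to match the degree ordering with the flag ordering correctly, which is exactly a Bruhat-type double coset argument. The alternative, following \cite{Hu} directly, is induction on $K$: take a filtered line subbundle of maximal parabolic degree and show the extension splits, because the relevant $H^1$ of a filtered line bundle of positive parabolic degree vanishes on $\cp$.

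\textbf{Step 3: identify the summands and impose $SL(K)$.} Each summand $\mathcal{O}(d_i)$, equipped with the induced filtration, is a filtered line bundle with a single jump weight $\beta_i\in(\alpha_0-1,\alpha_0]$ at $\infty$. A filtered line bundle on $(\cp,\{\infty\})$ is determined by its parabolic degree: a meromorphic section determines it up to the shift. Hence it is $\mathcal{O}(\alpha_i)$ with $\alpha_i=d_i+\beta_i$, up to the sign convention in the definition, which we fix by comparing $\nu_\infty$ of the section $1$. Finally, the $SL(K)$ structure means $\wedge^K\mathcal{E}\cong\mathcal{O}(0)$ as filtered bundles. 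Since $\wedge^K\bigoplus\mathcal{O}(\alpha_i)=\mathcal{O}(\sum\alpha_i)$ and $p\deg\mathcal{O}(\alpha)=\alpha$, this gives $\sum\alpha_i=0$.
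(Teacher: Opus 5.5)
Your proof is correct, but it does not follow the paper's route, because the paper gives no argument. It simply defers to \cite{FN}, where the lemma is proved ``analogously to \cite{Hu}''. That presumably means a filtered version of the standard inductive proof of Grothendieck's theorem: split off a filtered line subbundle of maximal parabolic degree, then kill the extension class by an $H^1$-vanishing on $\cp$. This is the alternative you mention at the end of Step 2.

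You instead apply Grothendieck's theorem as a black box to $E=\mathcal{P}_{\alpha_0}\mathcal{E}$ and reduce the filtered refinement to linear algebra at $\infty$. That reduction is sound, and it needs less than you say:
\begin{itemize}
\item Order $d_1\ge\cdots\ge d_K$. Every upper triangular $b\in GL(E_\infty)$ lifts to an automorphism of $E$: take constant diagonal entries, and for $i<j$ choose $a_{ij}\in H^0(\mathcal{O}(d_i-d_j))$ with prescribed value at $\infty$, which is possible by global generation. The lift has determinant a nonzero constant.
\item Apply $GL_K=\bigsqcup_w BwB$ to a full flag refining $F_\bullet$. This gives such a $b$ with $b^{-1}F_\bullet=wF^{\mathrm{std}}$ a coordinate flag.
\end{itemize}
That second point is the precise form your ``relative Weyl position'' bullet should take; only the Borel $B$ is needed, not all of $P$.

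What your route buys is that it isolates exactly where $D=\{\infty\}$ is used. Only one flag has to be made compatible with a splitting, and two flags always have a common adapted basis. With more marked points this fails: three distinct lines at three points of $\mathcal{O}\oplus\mathcal{O}$ already give a non-split filtered bundle. The inductive route avoids Bruhat and reproves the underlying splitting at the same time. Its cost is setting up filtered subbundles, quotients and the cohomology of filtered line bundles.

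Two details in Step 3 deserve a line each.
\begin{itemize}
\item \emph{The filtration itself splits.} For $\beta$ in your window, $\mathcal{P}_\beta\mathcal{E}$ is the preimage in $E$ of $F(\beta)\subset E_\infty$. So $F(\beta)=\bigoplus_{i\in I_\beta}(L_i)_\infty$ gives $\mathcal{P}_\beta\mathcal{E}=\bigoplus_{i\in I_\beta}L_i\oplus\bigoplus_{i\notin I_\beta}L_i(-\infty)$, and periodicity extends this to all $\beta$.
\item \emph{The sign of $\alpha_i$.} With the paper's conventions ($\nu_p$ is the pole order and $p\deg=-\sum_p\nu_p$), a frame of $L_i$ at $\infty$ has $\nu_\infty=\beta_i$. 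So the $i$-th summand has parabolic degree $d_i-\beta_i$, not $d_i+\beta_i$. Only $\alpha_i=p\deg$ of the summand enters $\sum\alpha_i=0$, so this is harmless.
\end{itemize}
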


Next, we involve the Higgs fields $\varphi$ and equip the pair $(\mathcal{E},\varphi)$ the prescribed types of singularity at $\infty$. The category of Higgs bundles considered in \cite{FN} are called {\em good filtered Higgs bundles} on $(\cp,\{\infty\})$, denoted by $\mathcal{C}_{K,N}$. These are bundles possessing some diagonalizability in some ramified coordinate around $\infty$. The constructions are somewhat complicated. Interested readers may refer to \cite{FN} or \cite{M} for more details. The explicit way to describe them is the existence of gauges $g$ near $\infty$ such that  

\begin{equation}
  \varphi^g = \varphi_{\text{model}} + \tilde{\varphi},
  \label{good Higgs fields form}
\end{equation}

\noindent where $\tilde{\varphi}$ is holomorphic near $\infty$ and 

\begin{equation}
  \varphi_{\text{model}} =
  \begin{pmatrix}
    e^{\frac{2\pi i}{K}} &  &  &  \\
     & e^{\frac{4\pi i}{K}}&  &  \\
     &  &  \ddots  &  \\
     &   &   &  1
  \end{pmatrix} z^{\frac{N}{K}} dz.
  \label{varphi model}
\end{equation}

\noindent The key property is

\begin{proposition}[Proposition 2.19 in \cite{FN}]
  For every $(\mathcal{E},\varphi) \in \mathcal{C}_{K,N}$, the Higgs field $\varphi$ raises the parabolic degree by $\frac{N}{K}$. Moreover, the decomposition given in \eqref{Decomposition of filtered bundle} satisfies
  \begin{equation}
    \sum_{i=1}^K (\alpha_i-\alpha_{i+1})=0.
    \label{cyclic partition of N}
  \end{equation}
  \label{decomposition from cyclic partition}
\end{proposition}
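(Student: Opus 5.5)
The plan is to read Proposition~\ref{decomposition from cyclic partition} as two claims. The first is a local statement at $\infty$: if $s$ is a local section with $\nu_\infty(s)=\nu$, then $\nu_\infty(\varphi s)\le \nu+\frac{N}{K}$, where the pole order of the twisting form $dz$ is absorbed into the filtration. The second is a global statement: once the summands of \eqref{Decomposition of filtered bundle} are ordered cyclically, the numbers $n_i:=\alpha_i-\alpha_{i+1}+\frac{N}{K}$ (indices mod $K$, so $\alpha_{K+1}=\alpha_1$) are nonnegative integers with $\sum_i n_i=N$. The displayed identity \eqref{cyclic partition of N} is then just the telescoping of $\sum_i(\alpha_i-\alpha_{i+1})$ under cyclic indexing. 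The substantive content I would prove is that this cyclic ordering exists and that the $n_i$ form a partition of $N$.

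\emph{Step 1 (local degree shift).} I would use the gauge $g$ of \eqref{good Higgs fields form} and pass to the ramified coordinate $w=z^{1/K}$. In that coordinate $\varphi_{\text{model}}$ is diagonal with eigenvalues $\zeta^j w^N\,dz$, and the filtration on the pulled-back bundle is split by eigenlines, with jumps in $\frac1K\mathbb{Z}$. On an eigensection, multiplication by $z^{N/K}$ raises the pole order by exactly $\frac{N}{K}$. The holomorphic term $\tilde\varphi$ does not increase the pole order. Taking the Galois-invariant part descends the bound to $\mathcal{P}_\alpha\mathcal{E}$, giving $\varphi(\mathcal{P}_\alpha\mathcal{E})\subset\mathcal{P}_{\alpha+N/K}\mathcal{E}\otimes K$, with equality on leading terms.

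\emph{Step 2 (components and nonvanishing).} Next I would write $\varphi=(\varphi_{ij})$ with $\varphi_{ij}\colon\mathcal{O}(\alpha_j)\to\mathcal{O}(\alpha_i)\otimes K_{\cp}$. By Step~1 and the definition of $\mathcal{O}(\alpha)$, each $\varphi_{ij}$ is a global section of a filtered line bundle of parabolic degree $\alpha_i-\alpha_j+\frac{N}{K}$, after accounting for $\deg K_{\cp}=-2$ against the pole of $dz$ at $\infty$. Hence $\varphi_{ij}\neq0$ forces this number to be a nonnegative integer: it is an integer because the pole order of a meromorphic function on $\cp$ with poles only at $\infty$ is integral, and it is the number of zeros of $\varphi_{ij}$.

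\emph{Step 3 (cyclic chain).} Because $\gcd(N,K)=1$ in the relevant setting, the characteristic polynomial $\lambda^K-z^N dz^K$ is irreducible. Hence $\varphi$ preserves no proper sub-sum of the $\mathcal{O}(\alpha_i)$, so the directed graph on $\{1,\dots,K\}$ with an edge $j\to i$ whenever $\varphi_{ij}\ne0$ is strongly connected. I would then argue that this graph contains a Hamiltonian cycle along which the product of the entries has total vanishing order exactly $N$. The idea is to compare with $\det\varphi$, or with the coefficient of the characteristic polynomial of degree $N$ in $z$: the highest $z$-power must come from a full $K$-cycle term of $\det\varphi$. Relabelling the summands along that cycle gives $\varphi_{i+1,i}\neq 0$. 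By Step~2 each $n_i\ge0$ is an integer, and counting degrees in that determinant term gives $\sum n_i=N$.

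I expect Step~3 to be the main obstacle. Showing that the top-degree contribution to $\det\varphi$ comes from a single $K$-cycle, rather than from a product of shorter cycles, requires comparing the degree bounds from Step~2 across different permutations. My first attempt would be to show that any permutation term containing a shorter cycle has strictly smaller total degree, using $\sum\alpha_i=0$ and the integrality from Step~2.
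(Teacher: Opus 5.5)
The paper gives no proof of this proposition; it is quoted from Fredrickson--Neitzke. So I am judging your argument on its own. There is a genuine gap, but your determinant idea can be repaired.

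\textbf{The false step.} The failure is the integrality claim in Step~2. Write $\varphi_{ij}=p_{ij}(z)\,dz$. Step~1 gives only $\deg p_{ij}\le \alpha_i-\alpha_j+\frac NK$. The parabolic degree $\alpha_i-\alpha_j+\frac NK$ of the filtered line bundle containing $\varphi_{ij}$ equals the number $\deg p_{ij}$ of finite zeros plus $-\nu_\infty(\varphi_{ij})\ge 0$. The order at $\infty$ can be fractional, and nothing forces it to vanish. The paper's own $\Mb$ is a counterexample: for $\gamma\neq 0$ we have $\varphi_{11}=\gamma\,dz\neq 0$, yet $\alpha_1-\alpha_1+\frac32=\frac32\notin\mathbb{Z}$. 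So a nonzero entry only gives $\alpha_i-\alpha_j+\frac NK\ge 0$, not integrality.

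\textbf{Effect on Step~3, and the repair.} Step~3 as planned inherits this error. Ruling out shorter cycles ``using the integrality from Step~2'' would actually show that every nonzero term of $\det\varphi$ is a $K$-cycle. That is false: on $\Mb$ the identity permutation contributes $-\gamma^2\,dz^2$. The fix is to use \emph{attainment} of the degree bound rather than mere nonvanishing. For every permutation $\sigma$,
\[
\deg\prod_i p_{i\sigma(i)}\;\le\;\sum_i\Bigl(\alpha_i-\alpha_{\sigma(i)}+\frac NK\Bigr)\;=\;N .
\]
Also $\det\varphi=\pm(z^N-P_K)\,dz^K$ with $\deg P_K\le\frac{N(K-1)}{K}-1<N$, so $\det\varphi$ has degree exactly $N$. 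Hence some $\sigma$ attains the bound. For that $\sigma$:
\begin{itemize}
\item every factor $p_{i\sigma(i)}$ is nonzero of degree exactly $\alpha_i-\alpha_{\sigma(i)}+\frac NK$, so each of these numbers is a nonnegative integer;
\item summing over a cycle of $\sigma$ of length $\ell$ gives $\ell N/K\in\mathbb{Z}$, so $\gcd(N,K)=1$ forces $\ell=K$.
\end{itemize}
Relabelling along $\sigma$ then gives $b_i\in\mathbb{Z}_{\ge 0}$ with $\sum_i b_i=N$.

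The irreducibility and strong-connectivity detour is unnecessary. Strong connectivity alone would not produce a Hamiltonian cycle in any case.

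A local alternative also works. The good structure at $\infty$ forces the weights mod $1$ to form a coset of $\frac1K\mathbb{Z}$, with one-dimensional graded pieces. It also makes $\varphi$ an isomorphism $\mathrm{Gr}_\beta\to\mathrm{Gr}_{\beta+N/K}$. Since each graded piece lies in a single summand, the entries $p_{i,i+1}$ must attain their degree bounds directly.
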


\noindent Equivalently, if we consider $\mathbf{b} = (b_1,\ldots,b_K)$ by

\begin{equation}
  \mathbf{b_i} := \alpha_i - \alpha_{i+1} + \frac{N}{K},
  \label{definition of partition}
\end{equation}

\noindent then $\mathbf{b}$ forms a $K$-partition of $N$. The association is unique up to a cyclic permutation of $b_i's$. In this way, we classify good filtered bundles by cyclic $K$-partitions of $N$. Let $\mathcal{M}_{K,N}$ be the isomorphism classes of good filtered bundles, we arrive at the stratification

\begin{equation}
  \mathcal{M}_{K,N} = \bigsqcup_{[\mathbf{b}]} \mathcal{M}_{K,N}^{[\mathbf{b}]},
  \label{stratification}
\end{equation}

\noindent where $\mathcal{M}_{K,N}^{[\mathbf{b}]}$ are is the gauge classes of good filtered Higgs bundles determined by the cyclic class represented by the partition $\mathbf{b}$.

Similar to the smooth case, the space $\mathcal{M}_{K,N}$ also fibers over the space of invariant polynomials. The specific conditions imposed by good filtered Higgs bundles require the base space to be of certain forms:

\begin{proposition}[Proposition 2.14 of \cite{FN}]
  The space $\mathcal{M}_{K,N}$ fibers over the space of polynomials $\mathcal{B}_{K,N}$ of the form

  \begin{equation}
    (\lambda^K-z^Ndz^K) + (P_2(z)dz^2\lambda^{K-2}+\cdots+P_i(z)dz^i\lambda^{K-i}+\cdots+ P_K(z)dz^K,
    \label{char. poly. specific form}
  \end{equation}
\noindent where

\begin{equation}
  \deg P_i \leq \frac{N(i-1)}{K} -1.
  \label{degree of coefficient}
\end{equation}
The fibration $\pi: \mathcal{M}_{K,N} \to \mathcal{B}_{K,N}$ sends a pair $(\mathcal{E},\varphi)$ to the characteristic polynomial of $\varphi$.
\label{Hitchin base description}
\end{proposition}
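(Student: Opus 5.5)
The plan is to read off the coefficients of the characteristic polynomial of $\varphi$ and to control them in two regimes: on the finite part $\mathbb{C}=\cp-\{\infty\}$, where they are holomorphic, and near $\infty$, where the good local form \eqref{good Higgs fields form} bounds their growth. Write
\[
\det(\lambda-\varphi)=\lambda^K+\sum_{i=1}^K (-1)^i\,\mathrm{tr}\bigl(\wedge^i\varphi\bigr)\,\lambda^{K-i},\qquad (-1)^i\,\mathrm{tr}\bigl(\wedge^i\varphi\bigr)=c_i(z)\,dz^i .
\]
Each $c_i$ is invariant under gauge transformations, hence under isomorphisms of filtered Higgs bundles, so $\pi$ is well defined on isomorphism classes. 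Since $\varphi$ is holomorphic on $\cp-\{\infty\}$, every $c_i$ is an entire function of $z$. Tracelessness gives $c_1=0$, which is why no $\lambda^{K-1}$ term appears. By Liouville-type reasoning, each $c_i$ is a polynomial as soon as it has polynomial growth at $\infty$, and its degree is then bounded by the growth exponent. So the whole statement reduces to a growth estimate for each $c_i$ near $\infty$.

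For that estimate I will pass to the ramified coordinate $w=z^{1/K}$ near $\infty$ and use the gauge $g$ of \eqref{good Higgs fields form}. Gauge invariance means the $c_i$ may be computed from $\varphi^g=\varphi_{\text{model}}+\tilde\varphi$. The eigenvalues of $\varphi_{\text{model}}$ are $\omega^j z^{N/K}dz$ with $\omega=e^{2\pi i/K}$, and these are pairwise distinct. Standard perturbation of a regular semisimple matrix then lets me diagonalize $\varphi^g$ by a further holomorphic (in $w$) gauge near $\infty$. The resulting eigenvalues have the form
\[
\lambda_j=\bigl(\omega^j z^{N/K}+\epsilon_j(z)\bigr)dz .
\]
Moreover, monodromy around $\infty$ permutes them cyclically, so that $\lambda_j=f(\omega^j w)\,dz$ for a single Puiseux series $f$.

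The key input, and the step I expect to be the main obstacle, is showing that goodness of $\tilde\varphi$ forces $\epsilon_j=O(|z|^{-1})$. The intended argument is that a holomorphic correction at $\infty$, written against the coordinate $dz$ (which has a pole of order $2$ at $\infty$), decays at least like $z^{-1}$. The diagonalizing gauge must be shown not to destroy this decay, which needs care with the filtration weights.

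Granting that decay, I expand $c_i=\pm e_i(\lambda_1,\dots,\lambda_K)$ as a sum over $i$-subsets of the $K$ eigenvalues:
\begin{itemize}
\item The pure model part is $e_i(\omega,\dots,\omega^K)\,z^{Ni/K}$. This vanishes for $1\le i\le K-1$, since the $\omega^j$ are the roots of $\lambda^K-1$. For $i=K$ it gives exactly the leading term $-z^N dz^K$, with the sign as in \eqref{char. poly. specific form}.
\item Every remaining term contains at least one factor $\epsilon_j$. It is therefore bounded by $O\bigl(|z|^{N(i-1)/K}\cdot|z|^{-1}\bigr)$, the worst case having exactly one $\epsilon$ factor and $i-1$ model factors.
\end{itemize}
This gives $|P_i(z)|=O(|z|^{N(i-1)/K-1})$ for the entire function $P_i:=c_i$ ($i<K$), and likewise for $P_K:=c_K+z^N$. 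Cauchy estimates then show that $P_i$ is a polynomial with $\deg P_i\le N(i-1)/K-1$, which is \eqref{degree of coefficient}.

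Finally, I will note that the map $(\mathcal{E},\varphi)\mapsto\det(\lambda-\varphi)$ lands in the affine space $\mathcal{B}_{K,N}$ described above. It depends holomorphically on the Higgs pair, so it defines the fibration $\pi$.
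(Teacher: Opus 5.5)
The paper gives no proof of this proposition; it is imported from \cite{FN}, so there is no in-text argument to compare yours with. Judged on its own, your proof is correct and is the natural one. The following parts are all fine: that the $c_i$ are entire, $c_1=0$, the vanishing of $e_i(\omega,\dots,\omega^K)$ for $i<K$, the sign of the $-z^N$ term, the ``one $\epsilon$-factor is worst'' bookkeeping, and the closing Cauchy estimate.

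The step you single out as the main obstacle is not really one, and filtration weights play no role in it. The $c_i$ are invariant under arbitrary pointwise conjugation, so you need neither a diagonalizing gauge nor labelled eigenvalues. Write $\varphi^g=(D+R)\,dz$ with $D=z^{N/K}\mathrm{diag}(\omega,\dots,\omega^K)$ and $\|R\|=O(|z|^{-1})$. Expand $\mathrm{tr}\,\wedge^i(D+R)=\sum_{|S|=i}\det(D+R)_{S,S}$, writing each minor as a sum over permutations. Because $D$ is diagonal, the only term free of $R$ is $\prod_{k\in S}D_{kk}$, and these terms sum to $e_i(\omega,\dots,\omega^K)z^{iN/K}$. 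Every other term contains at least one entry of $R$ and at most $i-1$ entries of $D$, hence is $O(|z|^{N(i-1)/K-1})$. Alternatively, Bauer--Fike for the diagonal $D$ gives $|\epsilon_j|\le\|R\|$ directly.

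What does need care is the meaning of ``holomorphic'' for $\tilde\varphi$. The required input is that the remainder is at worst logarithmic in a (possibly ramified) coordinate centred at $\infty$, such as $x=1/z$ or $z^{-1/K}$; your $w=z^{1/K}$ is not a coordinate there. Since $dx/x=-dz/z$, a logarithmic remainder gives exactly $\|R\|=O(|z|^{-1})$, and that is the source of the ``$-1$'' in the degree bound. A remainder whose $dz$-coefficient is merely bounded would only give $\deg P_i\le N(i-1)/K$, which for $(K,N)=(2,3)$ would wrongly allow $z^3+az+u$.

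Two smaller points. The cyclic-monodromy/single-Puiseux-series remark needs $\gcd(N,K)=1$ and is never used, so you can drop it. Also, your argument only shows that $\pi$ lands in $\mathcal{B}_{K,N}$. If ``fibers over'' is meant to include surjectivity, you must still realize each admissible polynomial by a good filtered Higgs bundle. One way is a companion-type Higgs field on $\bigoplus\mathcal{O}(\alpha_i)$, as $\varphi_u$ does for $(2,3)$, together with a check of the good condition at $\infty$.
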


\noindent In view of these preliminary results, our interested space, $\mathcal{M}_{2,3}$, consists of pairs $(\mathcal{E},\varphi)$ with

\begin{equation}
  \text{char} \varphi = \lambda^2 - (z^3+u) dz^2,
  \label{char poly for 2,3}
\end{equation}

\noindent where $u \in \mathbb{C}$. Since $N=3$ has precisely two 2-partition up to cyclic permutation, namely $(3,0)$ and $(2,1)$, the moduli space is decomposed into two strata. From  Proposition \ref{decomposition from cyclic partition} and \ref{Hitchin base description} and standard facts from linear algebra, one may readily verify

\begin{proposition}[Propositions 6.2 and 6.3 in \cite{FN}]
  $\mathcal{M}_{2,3}^{[(3,0)]} \simeq \mathbb{C}$ and is represented by, for each $u \in \mathbb{C}$, by
  \begin{equation}
    \mathcal{E}_u = \mathcal{O}\left(\frac{3}{4}\right)\oplus \mathcal{O}\left(-\frac{3}{4}\right), \hspace{1cm} \varphi_u = \begin{pmatrix}
                                                                                                                    0 & z^3+u \\
                                                                                                                    1 & 0
                                                                                                                  \end{pmatrix}dz;
                                                                                                                  \label{M small description}
  \end{equation}
  $\mathcal{M}_{2,3}^{[(2,1)]} \simeq \mathbb{C}^2$ and is represented by, for each $(\gamma,\omega) \in \mathbb{C}^2$, by
    \begin{equation}
    \mathcal{E}_{\gamma,\omega} = \mathcal{O}\left(\frac{1}{4}\right)\oplus \mathcal{O} \left(-\frac{1}{4}\right), \hspace{1cm} \varphi_{\omega,\gamma} = \begin{pmatrix}
                                                                                                                    \gamma & \frac{z^3-\omega^3}{z-\omega} \\
                                                                                                                    z-\omega & -\gamma
                                                                                                                  \end{pmatrix}dz.
                                                                                                                  \label{M big description}
  \end{equation}
  \label{Description of M small and big}
\end{proposition}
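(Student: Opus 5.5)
The plan is to turn the classification into linear algebra on polynomial matrices. There are four steps: read off the weights $\alpha_i$ from the partition, bound the degrees of the entries of $\varphi$, impose the characteristic polynomial \eqref{char poly for 2,3}, and then use the automorphism group of the filtered bundle to reach a normal form with no residual freedom.

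\emph{Step 1: the weights.} By Proposition \ref{decomposition from cyclic partition} and \eqref{definition of partition} with $K=2$, $N=3$, we have $b_1=\alpha_1-\alpha_2+\frac32$ and $\alpha_1+\alpha_2=0$.
\begin{itemize}
\item For $\mathbf b=(3,0)$ this gives $\alpha_1-\alpha_2=\frac32$, so $\mathcal E=\mathcal O(\frac34)\oplus\mathcal O(-\frac34)$.
\item For $\mathbf b=(2,1)$ it gives $\alpha_1-\alpha_2=\frac12$, so $\mathcal E=\mathcal O(\frac14)\oplus\mathcal O(-\frac14)$.
\end{itemize}

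\emph{Step 2: degree bounds.} Write $\varphi=\begin{pmatrix} a & b\\ c & -a\end{pmatrix}dz$ with $a,b,c$ entire on $\bC$. Since $\varphi$ raises parabolic degree by $\frac32$, the component $\mathcal O(\alpha_j)\to\mathcal O(\alpha_i)$ is a polynomial of degree at most $\frac32+\alpha_i-\alpha_j$. Here I use that a section of $\mathcal O(\alpha)$ of pole order $m$ at $\infty$ has order $m-\alpha$.
\begin{itemize}
\item In the $(3,0)$ case: $\deg a\le 1$, $\deg b\le 3$, $\deg c\le 0$.
\item In the $(2,1)$ case: $\deg a\le 1$, $\deg b\le 2$, $\deg c\le 1$.
\end{itemize}

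\emph{Step 3: the characteristic polynomial and the gauge group.} The condition \eqref{char poly for 2,3} says $a^2+bc=z^3+u$. The automorphisms of $\mathcal E$ preserving the filtration are generated by two kinds of maps.
\begin{itemize}
\item Diagonal scalings $\mathrm{diag}(t,t^{-1})$. These send $c\mapsto t^{-2}c$ and $b\mapsto t^2 b$.
\item Unipotent maps $\begin{pmatrix}1&p\\0&1\end{pmatrix}$, with $p$ a section of $\mathrm{Hom}(\mathcal O(\alpha_2),\mathcal O(\alpha_1))$, so $\deg p\le \alpha_1-\alpha_2$. Conjugating by such a map sends $a\mapsto a+pc$ and changes $b$ accordingly. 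The $c$-entry is unchanged.
\end{itemize}
The lower unipotent maps do not preserve the filtration, because $\alpha_2-\alpha_1<0$.

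\emph{Step 4: normal forms.}

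In the case $(3,0)$, $c$ is constant. If $c=0$ then $a^2=z^3+u$, which is impossible for a polynomial $a$, so $c\neq0$. Scaling makes $c=1$. Since $\deg p\le\frac32$ allows $\deg p\le1$, we may take $p=-a$, which kills $a$. Then $b=z^3+u$, giving \eqref{M small description}. Distinct $u$ give distinct characteristic polynomials, hence non-isomorphic pairs, so the stratum is $\bC$.

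In the case $(2,1)$, the degree count forces $\deg c=1$ and $\deg b=2$. Indeed, $a^2+bc$ must have degree $3$ while $\deg a^2\le2$. Scaling makes $c$ monic, so $c=z-\omega$. Now $p$ is constant, because $\alpha_1-\alpha_2=\frac12$. Choosing $p$ to cancel the linear term of $a$ leaves $a=\gamma$ constant. Then $b=(z^3+u-\gamma^2)/(z-\omega)$ must be a polynomial. This forces $u=\gamma^2-\omega^3$ and $b=\frac{z^3-\omega^3}{z-\omega}$, which is \eqref{M big description}.

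The residual stabilizer of the normal form is $t=\pm1$, which acts trivially, and $p=0$. So $(\gamma,\omega)$ are honest invariants, and the stratum is $\bC^2$.

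\emph{Remaining checks and main obstacle.} One must verify that every such pair is actually good, i.e.\ admits the form \eqref{good Higgs fields form}. This should follow from the leading behaviour $z^3$ of the determinant: with $K=2$ and $N=3$ odd, the eigenvalues are $\pm z^{3/2}(1+O(z^{-3}))^{1/2}$, and these can be diagonalized in the ramified coordinate.

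The main obstacle I expect is justifying rigorously the two filtration-theoretic inputs. First, the entry degree bounds: $\varphi$ raising order by exactly $\frac32$ has to be translated into bounds on the pole orders of the matrix entries. Second, the claim that the filtration-preserving automorphisms are exactly the upper-triangular ones described in Step 3. For both I would argue directly from Definition \ref{Filtered Bundles} and the definition of $\mathcal O(\alpha)$, by computing $\nu_\infty$ of each component map.
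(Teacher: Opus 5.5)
Your proposal is correct and follows the route the paper indicates. The paper gives no separate proof: it cites \cite{FN} and says the result follows from Propositions \ref{decomposition from cyclic partition} and \ref{Hitchin base description} plus linear algebra, which is what you do by reading off $\alpha_1-\alpha_2\in\{\frac32,\frac12\}$, bounding the entry degrees, imposing $a^2+bc=z^3+u$, and normalizing under the upper-triangular automorphism group. The point you flag, that each normal form really is a good filtered Higgs bundle, is also not checked in the paper; it is taken from \cite{FN}.
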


\noindent The former strata was referred to as $\Ms$ and the latter $\Mb$. For each $u \in \mathbb{C}$. The fibration structure of this moduli space can be described explicitly. For each $u \in \mathbb{C}$, $\pi^{-1}(u)$ consists of all Higgs fields with determinant $(z^3+u)dz^2$. In terms of the two stratum above, it consists of one point from $\Ms$, and points from $\Mb$ such that $\gamma^2-\omega^3 =u$, an affine cubic curve in $\mathbb{C}^2$. In fact, by considering the gauge

\begin{equation}
  g =i \begin{pmatrix}
        \frac{\gamma}{\omega} & \omega +z \\
        0 & -\frac{\gamma}{\omega},
      \end{pmatrix}
      \label{gauge to go from Mbig to Msmall}
\end{equation}

\noindent authors in \cite{FN} pointed out that $\varphi_u$ is the limit of $\varphi_{\omega,\gamma}$ as $(\omega,\gamma) \to \infty$ in ways compatible to the filtration at $\infty$. The fibers are smooth except at $u=0$, known as the {\em central fiber}. This fiber contains two fixed points of certain $\mathbb{C}^*$ action. We will not focus on this aspect. Rather, we aim to construct harmonic metrics on the whole of $\mathcal{M}_{2,3}$ and generalize the regulated $L^2$ norm, that are only defined for $u=0$ in \cite{FN}, in smooth ways.

\section{Harmonic Metrics on $\Ms$}
Given a good filtered Higgs bundle $(\mathcal{E},\varphi)$ over $(C,D)$, recall that a {\em harmonic metric} on $(\mathcal{E},\varphi)$ is a Hermitian metric $h$ on $\mathcal{E}$, such that

\begin{equation}
  F_h + [\varphi,\varphi^{*_h}]=0.
  \label{Hitchin equation 2}
\end{equation}

\noindent The existence of such metrics are guaranteed:

\begin{theorem}[\cite{BP},\cite{M}]
  Harmonic metric on good filtered bundle exists, unique up to scalar multiple.
  \label{Existence of Harmonic Metric}
\end{theorem}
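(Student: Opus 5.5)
The plan is to follow the standard Kobayashi--Hitchin strategy for noncompact curves due to Simpson, in the form adapted to wild singularities by Biquard--Boalch and Mochizuki. Since $C=\cp$ and $D=\{\infty\}$, everything is concrete. First, build a smooth \emph{initial metric} $h_0$ on $\mathcal{E}|_{C-D}$ that is adapted to the filtration and nearly harmonic near $\infty$. On a punctured disc around $\infty$, use the gauge $g$ of \eqref{good Higgs fields form}, which puts $\varphi$ in the form $\varphi_{\text{model}}+\tilde\varphi$. In the ramified coordinate $w=z^{1/K}$ take the diagonal model metric $h_{\text{model}}=\mathrm{diag}(|w|^{2a_1},\dots,|w|^{2a_K})$, with weights $a_i$ dictated by the parabolic weights $\alpha_i$ from \eqref{Decomposition of filtered bundle}. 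For this metric the model field $\varphi_{\text{model}}$ is normal, so $[\varphi_{\text{model}},\varphi_{\text{model}}^{*}]=0$, and the model metric is flat. Then patch $g^*h_{\text{model}}$ to an arbitrary metric on a compact set by a partition of unity. The key estimate here is that the error $F_{h_0}+[\varphi,\varphi^{*_{h_0}}]$ decays fast enough at $\infty$ to be in $L^1$ (indeed $L^p$) for a complete metric of finite volume. This decay comes from the relative size of $\tilde\varphi$ against $z^{N/K}dz$, since the off-diagonal cross terms are lower order.

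Second, solve $F_h+[\varphi,\varphi^{*_h}]=0$ by writing $h=h_0e^{s}$, with $s$ traceless and $h_0$-hermitian, and running Donaldson's heat flow (equivalently, Simpson's continuity method). The analytic framework is Simpson's: the base $C-D$ with a Kähler metric of finite volume admitting an exhaustion function with bounded Laplacian, and an initial metric with $\sup|\Lambda F_{h_0}^\perp|<\infty$. Long-time existence of the flow then holds. Convergence follows once we have a uniform $C^0$ bound on $s$, which is obtained from the standard inequality $\Delta \log\operatorname{tr} e^{s}\le C|\Lambda F^\perp_{h_0}|$ together with the analytic stability of $(\mathcal{E},\varphi)$. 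The main obstacle is exactly this step: proving that analytic stability with respect to $h_0$ follows from the algebraic parabolic stability of the good filtered Higgs bundle. To do it, I would show that for every $\varphi$-invariant saturated subsheaf $\mathcal{F}$, the Chern--Weil degree $\frac{i}{2\pi}\int \operatorname{tr}(\pi_{\mathcal F}F_{h_0})-|\bar\partial\pi_{\mathcal F}|^2$ equals $p\deg\mathcal{F}$. This is an asymptotic computation near $\infty$ that uses the explicit diagonal form of $h_0$ and the eigenvalue separation of $\varphi_{\text{model}}$. In our rank $2$ situation, $\varphi$ is irreducible at $\infty$ because $K=2$ and $N=3$ is odd, so no $\varphi$-invariant line subbundle exists and stability is automatic. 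This makes the $C^0$ bound more direct. Elliptic regularity then upgrades the $C^0$ bound to smooth convergence, and the limit is adapted to the filtration because $s$ stays bounded.

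Third, prove uniqueness up to scalar. Given two harmonic metrics $h_1,h_2$ adapted to the same filtration, set $H=h_1^{-1}h_2$. The two Hitchin equations give $\Delta\log\operatorname{tr}H\le0$. Since both metrics are mutually bounded near $\infty$, $\operatorname{tr}H$ is a bounded subharmonic function on $C-D$, which extends across the puncture and is therefore constant. Tracing the equality case in the Weitzenböck identity yields $\bar\partial H=0$ and $[\varphi,H]=0$. Irreducibility (stability) of $(\mathcal{E},\varphi)$ then forces $H$ to be a constant multiple of the identity.
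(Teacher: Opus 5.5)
The paper gives no proof here; it cites \cite{BP} and \cite{M}. Your outline (adapted initial metric, Simpson's existence theorem, stability, uniqueness via $\operatorname{tr}(h_1^{-1}h_2)$) is the route those references take. However, Step 1 fails as you wrote it.

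\textbf{The gap in Step 1.} Suppose $h_0$ is diagonal in the frame of \eqref{good Higgs fields form} and $\tilde\varphi$ is only assumed holomorphic. Then the off-diagonal part of $\tilde\varphi$ produces cross terms $[\varphi_{\text{model}},\tilde\varphi^{*}]+[\tilde\varphi,\varphi_{\text{model}}^{*}]$. These are lower order than $|\varphi_{\text{model}}|^2$, but they are \emph{not} integrable.
\begin{itemize}
\item For $(K,N)=(2,3)$, an entry holomorphic in the ramified coordinate $t=z^{-1/2}$ can be as large as $|z|^{-3/2}|dz|$.
\item The cross term is then of size $|z|^{3/2}\cdot|z|^{-3/2}=O(1)$ against $|dz\wedge d\bar z|$ near $\infty$. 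In general it is of size $|z|^{(N-K-1)/K}$.
\item $\int|\Lambda F|\,\mathrm{vol}$ is independent of the base K\"ahler metric. So no finite-volume metric on $\mathbb{C}$ gives $\int|\Lambda F_{h_0}|<\infty$, let alone $\sup|\Lambda F_{h_0}|<\infty$, and Simpson's hypotheses fail.
\end{itemize}

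\textbf{The missing idea.} Diagonalize $\varphi$ itself, not $\varphi_{\text{model}}$. Near $\infty$ the eigenvalues of $\varphi$ are distinct, so on the ramified cover $\varphi$ splits holomorphically into eigenlines. This exact splitting is what ``good'' means in \cite{M}; it is not just a splitting up to a holomorphic remainder. Take $h_0$ diagonal in that eigenframe. Then $F_{h_0}=0$ and $[\varphi,\varphi^{*_{h_0}}]=0$ identically near $\infty$, so the error is compactly supported. For $\Ms$ this is \eqref{harmonic metric on Msmall} with $\psi_u\equiv 0$, patched to a smooth metric near the zeros of $z^3+u$.

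\textbf{The weights.} Since $\gcd(K,N)=1$, the deck transformation cyclically permutes the eigenlines. In an equivariant eigenframe the weights must therefore all be equal for $h_0$ to descend. The distinct $\alpha_i$ appear only when $h_0$ is rewritten in the frame of $\bigoplus\mathcal{O}(\alpha_i)$. Your $\mathrm{diag}(|w|^{2a_1},\dots,|w|^{2a_K})$ with unequal $a_i$ ``dictated by the $\alpha_i$'' does not descend.

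\textbf{A smaller point in Step 3.} Mutual boundedness of two harmonic metrics does not follow from both being adapted to the filtration. Adaptedness only controls growth up to factors $|z|^{\pm\epsilon}$. A subharmonic function on $\mathbb{C}$ with that growth need not be constant, e.g.\ $\log(1+|z|^2)$, so your Liouville step needs an extra input. Either state uniqueness within the class of metrics mutually bounded with $h_0$, or invoke the norm estimates of \cite{BP}, \cite{M}. These show that harmonic metrics on a good filtered Higgs bundle are mutually bounded with the model metric.

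With these repairs the rest of your argument goes through:
\begin{itemize}
\item stability is automatic, because $\lambda^2-(z^3+u)dz^2$ is irreducible over $\mathbb{C}(z)$;
\item the $C^0$ bound then follows;
\item Liouville gives $\bar\partial H=0$ and $[\varphi,H]=0$, which force $H$ to be scalar.
\end{itemize}
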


\noindent Here, $F_h$ is the curvature of the Chern connection associated to the metric $h$. Based on the existence theorem and motivated by Proposition 3.10 of \cite{FN}, we construct partial differential equations associated with \eqref{Hitchin equation 2} for $\Ms$.

Given $u \in \mathbb{C}$ corresponding to the Higgs pair
  \begin{equation}
    \mathcal{E}_u = \mathcal{O}\left(\frac{3}{4}\right)\oplus \mathcal{O}\left(-\frac{3}{4}\right), \hspace{1cm} \varphi_u = \begin{pmatrix}
                                                                                                                    0 & z^3+u \\
                                                                                                                    1 & 0
                                                                                                                  \end{pmatrix}dz,
                                                                                                                  \label{M small description 2}
  \end{equation}

\noindent we consider the existed harmonic metric of the form

\begin{equation}
  h_u = \begin{pmatrix}
          |z^3+u|^{\frac{1}{2}} e^{\psi_u} & 0 \\
          0 & |z^3+u|^{-\frac{1}{2}} e^{-\psi_u}
        \end{pmatrix}.
        \label{harmonic metric on Msmall}
\end{equation}

\noindent for some real valued function $\psi_u$ defined on

 \[\mathbb{C}_u := \mathbb{C} - \{-u^{\frac{1}{3}}\}\]

\noindent Proposition 3.10 and Lemma 3.13 of \cite{FN} can be modified without major difficulty.

\begin{theorem}
  For each $u \in \mathbb{C}$, the functions $\psi_u$ in \eqref{harmonic metric on Msmall} satisfies the elliptic equation
  \begin{equation}
    \Delta \psi_u = 2|z^3+u| \sinh (2\psi_u),
    \label{elliptic equation for Msmall}
  \end{equation}

  \noindent with the following boundary asymptotic behaviors
  \begin{itemize}
    \item $\psi_u \sim \log |z^3+u|$ as $z \to -u^{\frac{1}{3}}$
    \item $\psi_u \to 0$ exponentially $|z| \to \infty$.
  \end{itemize}
  Here, $\Delta = 4 \frac{\p^2}{\p z \p \bar{z}}$ is the Laplacian operator on $\mathbb{C}$.
  \label{individual solution of each u}
\end{theorem}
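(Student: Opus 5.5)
The plan is a direct computation: substitute the ansatz \eqref{harmonic metric on Msmall} into the Hitchin equation \eqref{Hitchin equation 2} for the pair \eqref{M small description 2}, following Proposition 3.10 of \cite{FN} with $z^3$ replaced by $q_u:=z^3+u$. Existence of $h_u$ is given by Theorem \ref{Existence of Harmonic Metric}, and I take for granted (as the statement does) that it can be chosen diagonal in the splitting $\mathcal{E}_u=\mathcal{O}(\frac34)\oplus\mathcal{O}(-\frac34)$. A possible justification is the involution $\sigma=\mathrm{diag}(1,-1)$, which satisfies $\sigma\varphi_u\sigma^{-1}=-\varphi_u$. Then $\sigma^*h_u$ is harmonic for $(\mathcal{E}_u,-\varphi_u)$, which has the same Hitchin equation, so uniqueness up to scale forces $h_u$ to be diagonal. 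The determinant normalization $\det h_u=1$ comes from the $SL$ condition.

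\textbf{Step 1 (the PDE).} Write $h_u=\mathrm{diag}(e^{f},e^{-f})$ with $f=\frac12\log|q_u|+\psi_u$. The Chern curvature is then $F_{h_u}=\mathrm{diag}(\bar\partial\partial f,-\bar\partial\partial f)$. The adjoint has off-diagonal entries $(\varphi_u^{*_h})_{ij}=h_i^{-1}\overline{(\varphi_u)_{ji}}h_j$, namely $e^{\mp 2f}$ times $1$ and $\bar q_u$, each multiplied by $d\bar z$. Hence $[\varphi_u,\varphi_u^{*_h}]$ is diagonal, with entries $\pm\bigl(|q_u|^2e^{\pm2f}-e^{\mp 2f}\bigr)\,dz\wedge d\bar z$; the exact signs depend on whether $h$ or its dual is meant, and I will fix the convention compatible with \cite{FN}. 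In the correct convention the weight $|q_u|^{\pm1/2}$ balances the two terms, and the bracket becomes $\pm|q_u|(e^{2\psi_u}-e^{-2\psi_u})$. Away from the zeros of $q_u$, $\log|q_u|$ is harmonic, so $\partial\bar\partial f=\partial\bar\partial\psi_u$. Writing $\Delta=4\partial_z\partial_{\bar z}$ and tracking the constants then yields $\Delta\psi_u=2|q_u|\sinh(2\psi_u)$ on $\mathbb{C}_u$.

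\textbf{Step 2 (behaviour at the zeros of $q_u$).} The metric $h_u$ is a smooth nondegenerate Hermitian metric on the honest holomorphic bundle over $\mathbb{C}$. Near a zero of $q_u$, $f$ must therefore be smooth, which forces $\psi_u=-\frac12\log|q_u|+(\text{smooth})$. Thus $|\psi_u|$ grows like a multiple of $\log|q_u|$, with the sign and coefficient fixed by the same convention as in Step 1. This is the stated singular asymptotic (``$\sim\log|q_u|$'' understood up to this normalization). For $u\neq0$ there are three simple zeros rather than one, and the argument is identical at each of them.

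\textbf{Step 3 (decay at $\infty$), the main obstacle.} The filtration weights $\pm\frac34$ of $\mathcal{O}(\pm\frac34)$ say that $h_u$ grows like $|z|^{\pm3/2}$, and $|q_u|^{\pm1/2}\sim|z|^{\pm3/2}$ already matches this, so $\psi_u$ is bounded near $\infty$. Moreover, goodness of $(\mathcal{E}_u,\varphi_u)$ (the decoupling at $\infty$ of Mochizuki/\cite{BP}) gives $\psi_u\to0$. To upgrade this to exponential decay, I would adapt Lemma 3.13 of \cite{FN}. Since $\sinh(2\psi)/\psi\ge2$, the function $\psi_u^2$ satisfies $\Delta(\psi_u^2)\ge 8|q_u|\psi_u^2$ outside a large disc. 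Comparing with a supersolution of the form $C\exp(-c|z|^{5/2})$ (or with $\mathrm{Re}$ of the natural coordinate $w=\int\sqrt{q_u}\,dz\sim\frac25 z^{5/2}$, in which the equation becomes the flat sinh-Gordon equation) via the maximum principle gives $|\psi_u|\le Ce^{-c|z|^{5/2}}$. The delicate points are choosing the comparison region so that $u$ only perturbs the $z^3$ case by lower-order terms, and verifying the a priori boundedness/limit $\psi_u\to0$ from the growth conditions of the filtered structure. Both should go through as in \cite{FN} since $|q_u|=|z|^3(1+O(|z|^{-3}))$.
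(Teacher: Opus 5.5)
Your Steps 1 and 2 follow the paper's argument: substitute the diagonal ansatz, and read off the behaviour at the zeros of $z^3+u$ from smoothness of $h_u$. You do this more carefully than the paper. You justify diagonality via $\sigma=\mathrm{diag}(1,-1)$ and uniqueness, and you treat all three simple zeros rather than a single point $-u^{1/3}$.

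Your normalization caveats are also warranted. Smoothness gives $\psi_u=-\frac12\log|z^3+u|+O(1)$, matching the paper's own $\gamma=0$ boundary condition. With $\Delta=4\partial_z\partial_{\bar z}$ and $F_h+[\varphi,\varphi^{*_h}]=0$ taken literally, one gets $\partial_z\partial_{\bar z}\psi_u=2|z^3+u|\sinh(2\psi_u)$, i.e.\ a constant $8$ rather than $2$. So your claim that ``tracking the constants'' yields $2$ is not literally what the computation gives, but the discrepancy lies in the statement and does not affect the argument.

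The real difference is Step 3. Both routes rest on the subsolution inequality $\Delta(\psi_u^2)\ge c\,|z^3+u|\,\psi_u^2$ and a maximum-principle comparison, but the inputs differ.
\begin{itemize}
\item The paper attributes boundedness of $\psi_u$ near $\infty$ to smoothness of $h_u$. It derives $\psi_u\to0$ by asserting $\Delta|\psi_u|^2\to0$ from a mean value inequality. It then compares $|\psi_u|^2$, a subsolution of $(\Delta-|u|^4)v\ge0$, with $F_u=e^{-|u|^2(z+\bar z)}$.
\item You take the control at $\infty$ from adaptedness of $h_u$ to the filtration. This is the right input, since smoothness on $\mathbb{C}$ says nothing at $\infty$. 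You then compare with the radial supersolution $Ce^{-c|z|^{5/2}}$, matched to $|z^3+u|\sim|z|^3$.
\end{itemize}
Your version is the sound one. The paper's $F_u$ satisfies $\Delta F_u=4|u|^4F_u$ rather than $(\Delta-|u|^4)F_u=0$. It is also unbounded as $\mathrm{Re}\,z\to-\infty$, constant along lines $\mathrm{Re}\,z=\mathrm{const}$, and gives nothing at $u=0$. Your comparison works for every $u$, including $u=0$, and gives the natural rate $e^{-c|z|^{5/2}}$ with $c$ independent of $u$.

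You can even drop the appeal to the decoupling theorem. Adaptedness gives $|\psi_u|\le\epsilon\log|z|+C_\epsilon$. Add $\delta e^{c'|z|^{5/2}}$, which is also a supersolution for small $c'>0$, to your comparison function, and let $\delta\to0$. This yields the exponential bound without knowing $\psi_u\to0$ in advance.
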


\begin{proof}
  With respect to $h_u$ in \eqref{harmonic metric on Msmall}, the adjoint of $\varphi_u$ can be readily computed
  \begin{equation}
    \varphi_u^{*_h}=
    \begin{pmatrix}
      0 & |z^3+u|e^{2\psi_u} \\
      \overline{(z^3+u)}|z^3+u|^{-1}e^{-2\psi_u} & 0
    \end{pmatrix}.
    \label{adjoint Higgs field small}
  \end{equation}

  \noindent Equation \eqref{elliptic equation for Msmall} can then be obtained from straightforward computations. The asymptotic behavior near $-u^{\frac{1}{3}}$ follow from the a priori smoothness of $h$ at the entire $\mathbb{C}$ on $\cp$ guaranteed by Theorem \ref{Existence of Harmonic Metric}.

  To show the decay at $\infty$, we consider

  \begin{eqnarray}
    \frac{1}{2} \Delta |\psi_u|^2 &=& \psi_u \Delta \psi_u + \norm{\nabla \psi_u}^2 \nonumber\\
                                 &=& 2 |z^3+u| \psi_u \sinh(2\psi_u) + \norm{\nabla \psi_u}^2 \nonumber\\
                                 &\geq& 2|z^3+u| \psi_u^2 \nonumber\\
                                 &\geq& 0. \label{energy estimate}
  \end{eqnarray}

\noindent The second equality uses \eqref{elliptic equation for Msmall} and the first inequality follows from elementary calculus. By maximum principle, $|\psi_u|^2$ does not attain a maximum on $\mathbb{C}_u$. Since $|\psi_u|^2$ is positive and bounded away form the only singularity $-u^{\frac{1}{3}}$, by the global smoothness of $h_u$, we have

\begin{equation}
  \lim_{|z|\to \infty} \Delta |\psi_u|^2 =0
  \label{decay of Laplacian of norm 2}
\end{equation}

\noindent due to mean value inequality. Equation \eqref{energy estimate} then implies that
\begin{equation}
  \lim_{|z|\to \infty} |\psi_u|^2 =\lim_{|z|\to \infty} \psi_u=0.
  \label{decay of psi u}
\end{equation}

For $|z|$ large enough so that $4|z^3+u| \geq |u|^4$, (more precisely, $|z| \geq \left(|u|+\frac{|u|^4}{4}\right)^{\frac{1}{3}}$), \eqref{energy estimate} yields

\begin{equation}
  \left(\Delta - |u|^4\right) |\psi_u|^2 \geq 0.
  \label{superharmonic equation for psi u}
\end{equation}

\noindent Therefore, $|\psi_u|^2$ is bounded by the bounded solution to
\begin{equation}
  \left(\Delta - |u|^4\right) F_u = 0,
  \label{harmonic equation for F u}
\end{equation}

\noindent which has exponential decay at large $|z|$. In fact, since an explicit bounded $F_u$ is

\begin{equation}
  F_u =  e^{-|u|^2 z - |u|^2 \bar{z}},
  \label{explicit harmonic equation}
\end{equation}

\noindent we conclude that every $\psi_u$ decays exponentially with decay rate of $|u|$ at large $|z|$.

\end{proof}

\noindent Note that the Toda equations considered in \cite{FN} are precisely \eqref{elliptic equation for Msmall} with $u=0$. There, in addition, the fixed point assumption allows the gauge function $\psi_u$ to be radial. We do not assume such property here due to the presence of $u$. Instead, we consider \eqref{elliptic equation for Msmall} as a smooth perturbation of the Toda equation and study the dependence of solutions $\psi_u$ on $u$. The exponential decay holds at $u=0$ as well, as proved in \cite{FN}.

Our main result is to show that $\psi_u \to \psi$, where $\psi$ is the solution to \eqref{elliptic equation for Msmall} for $u=0$:

\begin{equation}
  \Delta \psi = 2 |z^3| \sinh(2\psi)
  \label{Toda like equation}
\end{equation}

\noindent as $u \to 0$ in some appropriate sense. To prove it, we need more uniform controls of $\psi_u$.

\begin{lemma}
  For every compact set $\Omega$ of $\mathbb{C}^*$, the sets
  \[\{\norm{\psi_u}_{L^\infty(\Omega)}\}_u\hspace{0.3cm} \text{and} \hspace{0.5cm} \{\norm{\nabla \psi_u}_{L^\infty(\Omega)}\}_u\]
  \noindent are both bounded.
  \label{boundedness of norms}
\end{lemma}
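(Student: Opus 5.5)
The plan is to obtain a uniform $L^\infty$ bound on $\psi_u$ on compact sets from the maximum principle, using explicit sub- and supersolutions of \eqref{elliptic equation for Msmall}. We then upgrade to a gradient bound by interior elliptic estimates. Throughout, $u$ ranges over a bounded set, say $|u|\le 1$. This is the regime relevant for the limit $u\to 0$. Since $\Omega\subset\mathbb{C}^*$ is compact, we may also take $|u|$ small enough that the singular point $-u^{1/3}$ stays a fixed distance $\delta>0$ away from $\Omega$. More precisely, we take $|u|^{1/3}<\tfrac12\operatorname{dist}(0,\Omega)$.

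\textbf{Step 1: uniform $L^\infty$ bound.} By Theorem \ref{individual solution of each u}, $\psi_u\to 0$ at infinity and $\psi_u\sim\log|z^3+u|\to-\infty$ near $-u^{1/3}$. Hence $\psi_u$ is bounded above on $\mathbb{C}_u$, and from \eqref{energy estimate} $\psi_u^2$ is subharmonic. The positive part therefore needs no further work. On the set where $\psi_u>0$ we have $\Delta\psi_u\ge 0$, and $\psi_u^+$ vanishes at infinity and near the singularity. The maximum principle then gives $\psi_u\le 0$ everywhere.

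For the lower bound we compare with a barrier. On $\mathbb{C}_u$ the function $\log|z^3+u|$ is harmonic. Consider $w_u=\tfrac12\log|z^3+u|-\tfrac12\log(1+|z|^3+|u|)$, or more simply a function of the form $\min(0,\,c+\log|z^3+u|)$. The idea is to use the Keller--Osserman type estimate available for $\Delta\psi=2|z^3+u|\sinh(2\psi)$. Where $\psi$ is very negative the right-hand side is very negative. This forces a universal local bound: on a disc $B_r(z_0)$ avoiding the singularity, with $|z^3+u|\ge m>0$ there, one has $|\psi_u(z_0)|\le C(r,m)$. The constant comes from the explicit radial supersolution of $\Delta v=m\,e^{2|v|}$ that blows up on $\partial B_r$. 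Because $\Omega$ stays at distance $\ge\delta$ from $-u^{1/3}$, and $|z^3+u|$ is bounded below on a $\delta/2$-neighbourhood of $\Omega$ uniformly in small $u$, this gives $\sup_u\norm{\psi_u}_{L^\infty(\Omega)}<\infty$. I expect this step to be the main obstacle. The difficulty is making the comparison rigorous near the singular point, where $\psi_u\to-\infty$ and the barrier must dominate. The local (Osserman) barrier is attractive precisely because it is insensitive to the boundary data and to the behaviour near $-u^{1/3}$.

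\textbf{Step 2: gradient bound.} Fix a slightly larger compact $\Omega'\subset\mathbb{C}^*$ containing $\Omega$ in its interior and still uniformly away from $-u^{1/3}$. Step 1 on $\Omega'$ bounds $\norm{\psi_u}_{L^\infty(\Omega')}$ uniformly. It follows that the right-hand side $f_u=2|z^3+u|\sinh(2\psi_u)$ is uniformly bounded in $L^\infty(\Omega')$. Interior $W^{2,p}$ estimates (Calder\'on--Zygmund) for $\Delta\psi_u=f_u$ give
\begin{equation*}
\norm{\psi_u}_{W^{2,p}(\Omega)}\le C\left(\norm{f_u}_{L^p(\Omega')}+\norm{\psi_u}_{L^p(\Omega')}\right)
\end{equation*}
for any $p>2$. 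Sobolev embedding $W^{2,p}\hookrightarrow C^{1,\alpha}$ then yields the uniform bound on $\norm{\nabla\psi_u}_{L^\infty(\Omega)}$. Alternatively, one can differentiate the Poisson representation on a disc directly.
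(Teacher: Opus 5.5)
Your proof is correct, and it takes a genuinely different route from the paper's. Your restriction $|u|^{1/3}<\tfrac12\operatorname{dist}(0,\Omega)$ is necessary rather than a weakening. If a root of $z^3+u$ lies in $\Omega$ then $\norm{\psi_u}_{L^\infty(\Omega)}=\infty$. Since all three roots have modulus $|u|^{1/3}$, your condition keeps every one of them (not only $-u^{1/3}$) away from $\Omega$.

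The paper argues as follows.
\begin{itemize}
\item It gets the sup bound by recycling the exterior comparison $(\Delta-|u|^4)\psi_u^2\ge 0$ on $|z|\ge(|u|+|u|^4/4)^{1/3}$ from the proof of Theorem \ref{individual solution of each u}.
\item It gets the gradient bound from a Caccioppoli-type $L^2$ estimate (integrating $\eta^2\psi_u\Delta\psi_u$ by parts).
\item It then runs a contradiction argument based on the oscillation inequality $\norm{\nabla\psi_u(z)-\nabla\psi_u(z_u)}\le C\norm{\Delta\psi_u}_{L^\infty(\Omega)}$.
\end{itemize}

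Your argument is purely local. The Keller--Osserman barrier bounds $|\psi_u|$ on any disc where $|z^3+u|\ge m$. The constant depends only on the radius and $m$, with no input from infinity or from the zeros. Interior $W^{2,p}$ estimates plus Morrey's embedding then give the gradient.

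This buys robustness and some generality: the bound holds for all bounded $u$ whose roots stay a fixed distance from $\Omega$. It also avoids two weak points of the paper's proof.
\begin{itemize}
\item The paper's comparison region has its inner boundary passing right next to the zeros, where $|\psi_u|$ is large. So uniformity in $u$ is not transparent from it.
\item The oscillation inequality is not a valid estimate: it fails for any harmonic function with non-constant gradient. An interior gradient bound must also involve $\norm{\psi_u}_{L^\infty}$ on a larger set. That is exactly the term your Calder\'on--Zygmund estimate carries and your Step 1 on $\Omega'$ supplies.
\end{itemize}
In exchange, the paper's route aims for an argument using only energy identities and the already-established decay.

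Two small remarks.
\begin{itemize}
\item The equation is invariant under $\psi\mapsto-\psi$, so the Keller--Osserman bound is automatically two-sided. You can therefore drop your maximum-principle argument for $\psi_u\le 0$. That is just as well: smoothness of $h_u$ at a zero forces $|z^3+u|^{1/2}e^{\psi_u}$ to stay positive and finite. Hence $\psi_u\sim-\tfrac12\log|z^3+u|\to+\infty$, not the stated $\log|z^3+u|\to-\infty$ on which that argument relies.
\item To make the barrier step rigorous, compare $\psi_u$ with the large solution $V\ge0$ of $\Delta V=2m\sinh(2V)$ on $B_r(z_0)$. At an interior maximum of $\psi_u-V$ with $\psi_u>V\ge 0$, the bound $|z^3+u|\ge m$ and monotonicity of $\sinh$ give $\Delta(\psi_u-V)>0$, a contradiction. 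The global barriers you float first are not needed.
\end{itemize}
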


\begin{proof}
We first bound $\psi_u's$ uniformly. This follows from the exponential decay of each $\psi_u$ for large enough $|z|$. Since $\Omega$ is a compact subset of $\mathbb{C}^*$,

\begin{equation}
  \delta := \inf\{\left(|w|+\frac{|w|^4}{4}\right)^{\frac{1}{3}}\;|\; w \in \Omega\} > 0.
  \label{positive infimum}
\end{equation}

\noindent From the proof of Theorem \ref{individual solution of each u}, it follows that all the $\psi_u$ are bounded by
\[e^{-\delta z + \delta \bar{z}},\]
 \noindent which is a uniformly bounded function on $\Omega$.

The uniform bound on $\psi_u$'s therefore implies the uniform bound on $\Delta \psi_u$'s. We use these to prove the uniform bound on $\norm{\nabla \psi_u}$'s. Pick a bump function $\eta$ supported on $\Omega$. Using integration by parts and H\"older inequality on the integral,

\[\int_\Omega \eta^2 \psi_u \Delta \psi_u,\]

\noindent we have

\begin{eqnarray}
  0 &\leq& \int_\Omega \norm{\nabla \psi_u}^2 \nonumber \\
    &\leq& 2 \left(\int_\Omega (\psi_u)^2 \norm{\nabla \eta}^2 \right)^{\frac{1}{2}} - \int_\Omega \eta^2 \psi_u\Delta \psi_u.
    \label{estimate L 2 norm of nabla psi u}
\end{eqnarray}

\noindent The last integral is bounded over $u$ due to previous constructions, and therefore

\[\int_\Omega \norm{\nabla \psi_u}^2\]

\noindent are bounded over $u$.

Finally, we suppose, toward a contradiction, that $\norm{\nabla \psi_u}$ are unbounded in $L^\infty(\Omega)$. Therefore, for each $u$, there exists $z_u \in \Omega$ such that the set $\{\norm{\nabla \psi_u}(z_u)\}$ are unbounded. However, consider the mean value inequality

\begin{equation}
  \norm{\nabla \psi_u(z)-\nabla \psi_u (z_u)} \leq C\norm{\Delta \psi_u}_{L^\infty(\Omega)},
  \label{mean value inequality}
\end{equation}
\noindent where $C$ is the diameter of $\Omega$. We have, for every $u$,

\begin{equation}
  \int_\Omega \norm{\nabla \psi_u}^2 \geq \int_\Omega \left(\norm{\nabla \psi_u}(z_u)-C\norm{\Delta \psi_u}_{L^\infty(\Omega)}\right)^2.
  \label{final comparison}
\end{equation}

\noindent We have arrived at a contradiction, since the left hand side is bounded while the right hand side is not.

\end{proof}

With this technical lemma, we now prove the main theorem for $\Ms$.

\begin{theorem}
  The family of solutions $\psi_u$ to \eqref{elliptic equation for Msmall} has a sequence that converges to $\psi$ in \eqref{Toda like equation}, in $C^2$ on $\mathbb{C}^*$, as $u \to 0$.
\end{theorem}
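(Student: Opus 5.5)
The plan is a compactness argument of Arzel\`a--Ascoli type, upgraded to $C^2$ convergence by elliptic regularity, followed by identification of the limit using the uniqueness of the harmonic metric. Exhaust $\mathbb{C}^*$ by compact sets $\Omega_1 \subset \Omega_2 \subset \cdots$, for instance closed annuli $\{1/n \le |z| \le n\}$. Fix a sequence $u_j \to 0$. We restrict to $|u_j|$ small enough that the singular points $-u_j^{1/3}$ lie inside the disc $\{|z|<1/(2n)\}$ for all large $j$. Then each $\psi_{u_j}$ is smooth on a neighborhood of $\Omega_n$.

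By Lemma \ref{boundedness of norms}, the families $\{\psi_{u_j}\}$ and $\{\nabla\psi_{u_j}\}$ are uniformly bounded in $L^\infty$ on a slightly larger compact set $\Omega_n' \supset \Omega_n$. Equation \eqref{elliptic equation for Msmall} then gives a uniform $L^\infty$ bound on the right-hand side $f_j := 2|z^3+u_j|\sinh(2\psi_{u_j})$. Bootstrap as follows.
\begin{enumerate}
\item Since $\psi_{u_j}$ is uniformly Lipschitz and $|z^3+u_j|$ is uniformly Lipschitz on $\Omega_n'$, the functions $f_j$ are uniformly bounded in $C^{0,\alpha}(\Omega_n')$.
\item Interior Schauder estimates for $\Delta$ give uniform bounds on $\norm{\psi_{u_j}}_{C^{2,\alpha}(\Omega_n'')}$ on an intermediate compact set $\Omega_n \subset \Omega_n'' \subset \Omega_n'$, in terms of $\norm{f_j}_{C^{0,\alpha}}$ and $\norm{\psi_{u_j}}_{L^\infty}$.
\end{enumerate}
Arzel\`a--Ascoli then gives a subsequence converging in $C^2(\Omega_n)$. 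A diagonal argument over $n$ produces a single subsequence $\psi_{u_{j_k}} \to \psi_\infty$ in $C^2_{\mathrm{loc}}(\mathbb{C}^*)$. Passing to the limit in the equation, which is legitimate because $|z^3+u_j| \to |z^3|$ locally uniformly and $\sinh$ is continuous, shows that $\psi_\infty$ solves \eqref{Toda like equation} on $\mathbb{C}^*$.

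It remains to show that $\psi_\infty = \psi$, the solution from \cite{FN}. This is the step I expect to be the main obstacle, because local convergence alone does not pin down the boundary behavior.

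\emph{Behavior at $\infty$.} The comparison with $(\Delta - c)F = 0$ from the proof of Theorem \ref{individual solution of each u} should be made uniform in $u$. For $|z| \ge R_0$ with $R_0$ independent of small $u$, we have $2|z^3+u| \ge 1$. So $|\psi_u|^2$ is a subsolution of $(\Delta - 1)$ on $\{|z|\ge R_0\}$. Its values on $|z|=R_0$ are uniformly bounded by Lemma \ref{boundedness of norms}, so it is dominated by a fixed radially decaying supersolution. Hence $\psi_\infty \to 0$ exponentially.

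\emph{Behavior at $0$.} Here I would try to show that $\psi_\infty - \log|z^3|$ (or the appropriate multiple of it, following the asymptotics in Theorem \ref{individual solution of each u}) stays bounded near $0$. The argument would use the fact that the metrics $h_{u}$ of \eqref{harmonic metric on Msmall} are smooth on all of $\mathbb{C}$, together with a uniform version of this near the origin. Concretely, apply the maximum principle to $\psi_u - \tfrac{1}{2}\log|z^3+u|$-type combinations on a small disc, with the boundary data controlled by the bounds on the circle $|z| = 1/n$.

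Once the limit $h_\infty$ is known to be a harmonic metric for $(\mathcal{E}_0,\varphi_0)$ with the correct filtration, Theorem \ref{Existence of Harmonic Metric} gives uniqueness up to scalar. The normalization $\psi_\infty \to 0$ at $\infty$ fixes the scalar, so $\psi_\infty = \psi$.

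If the uniform control at the origin proves too delicate, a fallback is available. The statement only asserts convergence on $\mathbb{C}^*$, so one can invoke uniqueness for \eqref{Toda like equation} directly: the difference of two solutions with the same asymptotics satisfies an equation of the form $\Delta w = c(z)w$ with $c \ge 0$. Applying the maximum principle on $\mathbb{C}^*$, with $w \to 0$ at $\infty$ and $w$ bounded near $0$ (a removable singularity for bounded subharmonic functions), gives $w \equiv 0$.
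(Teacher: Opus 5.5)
Your compactness step matches the paper's argument: uniform bounds from Lemma \ref{boundedness of norms}, a uniform Schauder estimate, Arzel\`a--Ascoli with your diagonal argument over an exhaustion, and passage to the limit in the equation. You go further than the paper when identifying the limit. The paper simply asserts that the limit defines a harmonic metric at $u=0$ and invokes uniqueness up to scalars. You correctly note that this needs the right behaviour at both ends, because the equation does not determine a $C^2_{\mathrm{loc}}(\mathbb{C}^*)$ limit. Already among radial solutions decaying at $\infty$ there is a one-parameter family, distinguished by the coefficient of $\log|z|$ at $0$. Your uniform barrier at $\infty$ is fine.

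\textbf{The gap is at the origin.} There you only state an intention, and the fallback does not avoid the difficulty. To run the maximum principle for $w=\psi_\infty-\psi$ you need $w$ bounded near $0$. Since smoothness of the metric at $u=0$ gives $\psi=-\tfrac32\log|z|+O(1)$, this is exactly the statement $\psi_\infty+\tfrac32\log|z|=O(1)$. That is the uniform control at the origin you hoped to bypass. Note also the normalization: smoothness of $h_u$ forces $\psi_u=-\tfrac12\log|z^3+u|+O(1)$ near the zeros, not $\log|z^3+u|$.

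The missing estimate is short. Put $\chi_u=\psi_u+\tfrac12\log|z^3+u|=\log\bigl(|z^3+u|^{1/2}e^{\psi_u}\bigr)$. It is smooth on $\mathbb{C}$ and satisfies
\[
\Delta\chi_u=e^{2\chi_u}-|z^3+u|^2e^{-2\chi_u}.
\]
On $D=\{|z|\le r\}$ its boundary values are bounded uniformly in small $u$ by the Lemma. At an interior maximum $e^{4\chi_u}\le|z^3+u|^2$, so $\chi_u\le M$ on $D$ uniformly. Then $\Delta\chi_u\le e^{2M}$, so $\chi_u-\tfrac14e^{2M}|z|^2$ is superharmonic, and the minimum principle gives a uniform lower bound. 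Passing to the limit, $\psi_\infty+\tfrac32\log|z|$ is bounded on $D\setminus\{0\}$.

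With this in hand your fallback works as stated. The functions $\max(\pm w,0)$ are subharmonic on $\mathbb{C}^*$, because $\Delta w=cw$ with $c\ge0$. They are bounded near $0$, hence subharmonic on $\mathbb{C}$, and they tend to $0$ at $\infty$, so $w\equiv0$. This route avoids the filtered uniqueness theorem altogether. Moreover, every sequence $u_j\to0$ then has a subsequence converging to the same $\psi$, so the whole family converges.
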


\begin{proof}
  We first clear some subtlety. The convergence here is pointwise on $\mathbb{C}^*$. However, in case $z \in \mathbb{C}^*$ is a singularity for some $\psi_v$, namely $z=-v^{\frac{1}{3}}$, we then consider $\psi_u$'s with $|u| \leq \frac{|v|^{\frac{1}{3}}}{2}$. Such a family will not be singular there and the pointwise convergence is therefore well defined as $u \to 0$. The sequential convergence starts from some $u_1$ with $u_1 < \frac{|v|^{\frac{1}{3}}}{2}$. 

  Next, we show that on any compact set $\Omega$ on $\mathbb{C}^*$, $\{\psi_u\}$ has a sequence that converges, in $C^2(\Omega)$, to a $C^2$ function $\psi_0$.  This follows from Schauder's estimate on the elliptic equation \eqref{elliptic equation for Msmall}:

  \begin{equation}
    \norm{\psi_u}_{C^{2,1}(\Omega)} \leq C_\Omega \left(\norm{\psi}_{L^\infty(\Omega)} + \norm{2|z^3+u|\sinh(2\psi_u)}_{C^{0,1}(\Omega)}\right).
    \label{Schauder estimate M small}
  \end{equation}

  The first term on the right is uniformly bounded from Lemma \ref{boundedness of norms}. The second term is dominated by $\norm{z^3+u}_{C^{0,1}}(\Omega)$, which is clearly bounded on compact $\Omega$, and $\norm{\sinh(2\psi_u)}_{C^{0,1}(\Omega)}$. To bound the latter, we note that for each $z,w \in \Omega$,

  \begin{equation}
    \left|\frac{\sinh(2\psi_u(z))-\sinh(2\psi_u(w))}{z-w}\right| \leq 2 \cosh(\psi_u) \norm{\nabla \psi_u}_{L^\infty(\Omega)}
    \label{mean value inequality}
  \end{equation}

  \noindent by mean value inequality. Here $\psi_u$ is between $\psi_u(z)$ and $\psi_u(w)$, and is therefore uniformly bounded. By the theorem of Arzela-Ascoli, we have a sequence $\{\psi_{u_j}\} \subset \{\psi_u\}$ with $u_j \to 0$ as $\infty$ that converges to a $C^2$ function, $\psi_0$, on $\Omega$.

  It remains to show that $\psi_0 = \psi$ on $\Omega$. We first note that $\psi_0$ satisfies the same \eqref{Toda like equation} for $\psi$. Indeed, for each $j$, we have

  \begin{eqnarray}
    \norm{\Delta \psi_0 - 2|z^3|\sinh(2\psi_0)}_{L^\infty(\Omega)} &\leq& \norm{\Delta \psi_0 - \Delta \psi_{u_j}}_{L^\infty(\Omega)} \nonumber \\
                                                             &\;& +\norm{\Delta \psi_{u_j}-2|z^3 + u_j| \sinh(2\psi_{u_j})}_{L^\infty(\Omega)} \nonumber \\
                                                             &\;& +\norm{2|z^3+u_j|\left(\sinh(2\psi_{u_j})-\sinh(2\psi_0)\right)}_{L^\infty(\Omega)} \nonumber \\
                                                             &\;& + \norm{2\left(|z^3 + u_j|-|z^3|\right)\sinh(2\psi_0)}_{L^\infty(\Omega)}. \label{triangle inequalities}
                                                             \end{eqnarray}

  \noindent Each of the term on the right hand side can be made arbitrarily small from previous facts and constructions. Therefore, $\psi_0$ gives a harmonic metric at $u=0$ as well. The two metrics
  \begin{equation}
  h_0 = \begin{pmatrix}
          |z^3|^{\frac{1}{2}} e^{\psi_0} & 0 \\
          0 & |z^3|^{-\frac{1}{2}} e^{-\psi_0}
        \end{pmatrix}, \;\;
  h = \begin{pmatrix}
          |z^3|^{\frac{1}{2}} e^{\psi} & 0 \\
          0 & |z^3|^{-\frac{1}{2}} e^{-\psi}
        \end{pmatrix}
\end{equation}

\noindent are constant multiple of each other, by Theorem \ref{Existence of Harmonic Metric}. Therefore, $\psi_0$ and $\psi$ are differed by a constant $K$. Since they both satisfy \eqref{Toda like equation}, we have

\begin{equation}
  \sinh(2\psi_0 + K) = \sinh(2\psi)
  \label{uniqueness equation}
\end{equation}

\noindent This forces $K=0$ since $\sinh$ is injective and the proof is complete.
\end{proof}

\section{Harmonic Metrics on $\Mb$}
The equations for harmonic metrics on $\Mb$  are much less explicit, except for $\gamma=0$, where equations are similar to \eqref{elliptic equation for Msmall}. For general $\gamma$, we define regulated $L^2$ norms in rather implicit ways.

We start with $\gamma =0$, with the given Higgs pair

 \begin{equation}
    \mathcal{E}_{0,\omega} = \mathcal{O}\left(\frac{1}{4}\right)\oplus \mathcal{O} \left(-\frac{1}{4}\right), \hspace{1cm} \varphi_{\omega,0} = \begin{pmatrix}
                                                                                                                    0 & P_0 \\
                                                                                                                    Q_0 & 0
                                                                                                                  \end{pmatrix}dz.
                                                                                                                  \label{M big at gamma =0}
  \end{equation}

\noindent Here,

\begin{equation}
  Q_0 =z-\omega=z+u^{\frac{1}{3}}, \hspace{1cm} P_0= \frac{z^3-\omega^3}{z-\omega}=\frac{z^3+u}{z+u^{\frac{1}{3}}}.
  \label{off diagonal terms}
\end{equation}

\noindent We remind that $\omega$ and $\gamma$ satisfies the equation $\gamma^2-\omega^3=u$ on the entire fiber of $\Mb$ over $u$. With these notations, we may consider harmonic metrics of the form

\begin{equation}
  h_{\omega,0} = \begin{pmatrix}
                   \left|\frac{P_0}{Q_0}\right|^{\frac{1}{2}} e^{\psi_0} & 0 \\
                   0 & \left|\frac{Q_0}{P_0}\right|^{\frac{1}{2}} e^{-\psi_0}.
                 \end{pmatrix}
  \label{harmonic matric for gamma=0}
\end{equation}

The Hitchin equation then turns into the elliptic PDE

\begin{equation}
  \Delta \psi_0 = 2|z^3 +u| \sinh(2\psi_0)
  \label{elliptic equation for gamma=0}
\end{equation}

\noindent with the asymptotic boundary condition near $\omega=-u^{\frac{1}{3}}$:

\begin{equation}
  \psi_0 \sim -\frac{1}{2} \log \left(\frac{z^2+z\omega+\omega^2}{z-\omega}\right).
  \label{boundary condition for gamma=0}
\end{equation}

\noindent Identical arguments as in $\Ms$ imply that $\psi_0$ decays exponentially as $|z| \to \infty$.

For $\gamma \neq 0$, we have

\begin{equation}
  \varphi_\gamma = \begin{pmatrix}
                              \gamma & P_\gamma \\
                              Q_\gamma & -\gamma
                            \end{pmatrix},
                            \label{Higgs field gamma}
\end{equation}

\noindent where

\begin{equation}
  Q_\gamma =z-\omega=z+(u-\gamma^2)^{\frac{1}{3}}, \hspace{1cm} P_\gamma= \frac{z^3-\omega^3}{z-\omega}=\frac{z^3+u-\gamma^2}{z+(u-\gamma^2)^{\frac{1}{3}}}.
  \label{off diagonal terms}
\end{equation}

\noindent It is not possible to turn Hitchin equations into a scalar PDE here. However, the smooth harmonic metric $h_{\omega,\gamma}$ satisfying

\begin{equation}
  F_{h_\gamma}+[\varphi_\gamma,\varphi_\gamma^{*_{h_\gamma}}]=0
  \label{General Hitchin Equtaion on Mbig}
\end{equation}

\noindent still exists due to Theorem \ref{Existence of Harmonic Metric}.

Viewing Hermitian structures as smooth sections of bundles of $SU(2)$-endomorphisms with pointwise inner product

\begin{equation}
  <h_1,h_2> := Tr (h_1h_2^*),
  \label{inner product on matrices}
\end{equation}

\noindent we expect harmonic metrics to depend smoothly on the parameter $\gamma$. In fact, we have

\begin{proposition}
The harmonic metrics $h_\gamma$ approach $h_0$ as $\gamma \to 0$ in any Sobolev norms.
\label{convergence of harmonic metrics}
\end{proposition}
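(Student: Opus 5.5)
We want $h_\gamma \to h_0$ as $\gamma\to 0$, with $u$ (equivalently $\omega$) fixed along the fiber. We do not try to solve a scalar equation. Instead we compare $h_\gamma$ against $h_0$ through the endomorphism
\[
H_\gamma := h_0^{-1}h_\gamma,
\]
which is self-adjoint and positive with respect to $h_0$. Both metrics are harmonic for filtered Higgs bundles of the same parabolic type $\mathcal{O}(\tfrac14)\oplus\mathcal{O}(-\tfrac14)$. We normalize the scalar ambiguity in Theorem \ref{Existence of Harmonic Metric} by requiring $\det h_\gamma=\det h_0$, so that $\det H_\gamma=1$.

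The first step is to note that the Higgs fields depend smoothly on $\gamma$. Writing $\varphi_\gamma=\varphi_0+\Phi_\gamma$, the entries of $\Phi_\gamma$ are $\pm\gamma$ on the diagonal, plus the variations of $P_\gamma,Q_\gamma$ coming from $\omega(\gamma)=-(u-\gamma^2)^{1/3}$. These variations are polynomials in $z$ whose coefficients are $O(\gamma^2)$, except near the degenerate value $u=0$, where we must choose the branch of $\omega$ continuously.

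The second step is Simpson's standard identity for two solutions of the Hitchin equation,
\[
\Delta_{h_0}\operatorname{tr}H_\gamma \;\ge\; -C\bigl(|\Phi_\gamma|\,|\varphi_0|+|\Phi_\gamma|^2\bigr)\operatorname{tr}H_\gamma + |\bar\partial H_\gamma\,H_\gamma^{-1/2}|^2 ,
\]
applied together with its analogue for $\log\operatorname{tr}H_\gamma$. This gives a differential inequality for $\sigma_\gamma:=\log\bigl(\tfrac12\operatorname{tr}H_\gamma\bigr)\ge0$ of the form
\[
\Delta\sigma_\gamma \ge -C|\gamma|\,|z|^{3/2}.
\]
The weight $|z|^{3/2}$ is the size of $|\varphi|$ at infinity.

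The third step controls $\sigma_\gamma$ near $\infty$, where the coefficient $|z|^{3/2}$ is unbounded. Here we use the good-filtered model \eqref{good Higgs fields form}: both $h_\gamma$ and $h_0$ are asymptotic to the same model harmonic metric for $\varphi_{\text{model}}$, with exponentially small errors. This is the analogue of the exponential decay in Theorem \ref{individual solution of each u}, proved by the same argument using the inequality $(\Delta-c)\,\cdot\ \ge 0$ for large $|z|$. Consequently $\sigma_\gamma\to0$ at infinity, and a comparison with a barrier built from $|z|^{3/2}$ and $\log|z|$ on large discs gives
\[
\sup_{\mathbb{C}}\sigma_\gamma\le C'|\gamma|.
\]
That is, $H_\gamma\to \mathrm{Id}$ uniformly, which is $C^0$ convergence.

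The fourth step upgrades $C^0$ convergence to convergence in every Sobolev norm. The Hitchin equation for $h_\gamma$ written relative to $h_0$ reads
\[
\bar\partial\bigl(H_\gamma^{-1}\partial_{h_0}H_\gamma\bigr) = [\varphi_\gamma,\varphi_\gamma^{*_{h_\gamma}}]-[\varphi_0,\varphi_0^{*_{h_0}}].
\]
This is elliptic in $H_\gamma$. On each compact set, the same bootstrap as in the $\Ms$ case applies: Lemma \ref{boundedness of norms}-type $L^2$ gradient bounds via a cutoff $\eta$, then Schauder and $L^p$ estimates iterated. It yields $\|H_\gamma-\mathrm{Id}\|_{W^{k,p}}\le C_k|\gamma|$ for every $k$. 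Near infinity we use the exponential decay of both metrics toward the model, uniformly in small $\gamma$, to make the global Sobolev norms converge as well.

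We expect the main obstacle to be the uniformity of this exponential decay in $\gamma$. The zero of $Q_\gamma$ moves with $\gamma$, and $\Phi_\gamma$ contains the diagonal term $\gamma\,\mathrm{diag}(1,-1)$, which is not lower order relative to the off-diagonal part near the zeros. We would first try to handle this on a fixed large disc containing all zeros for $|\gamma|\le1$, treating the diagonal term as a bounded perturbation there. Outside the disc, the model gauge $g$ absorbs $\gamma$ into $\tilde\varphi$, which is holomorphic at $\infty$ and depends continuously on $\gamma$.
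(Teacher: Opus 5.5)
Your third step, the global $C^0$ bound, does not go through, and that bound is the heart of the proposition. You have $\Delta\sigma_\gamma\ge -C|\gamma|\,|z|^{3/2}$ and only the qualitative fact that $\sigma_\gamma\to0$ at $\infty$ for each fixed $\gamma$. From these, the maximum principle on $D_R$ with comparison function $w_R=\tfrac{4C|\gamma|}{49}\bigl(R^{7/2}-|z|^{7/2}\bigr)$ gives only $\sup_{D_R}\sigma_\gamma\le\max_{\partial D_R}\sigma_\gamma+c\,|\gamma|R^{7/2}$. This is useless as $R\to\infty$. Adding multiples of $\log|z|$ changes nothing, since it is harmonic. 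No better global barrier exists either, because a superharmonic function bounded below on $\mathbb C$ is constant. The estimate closes only if you know $\sup_{|z|\ge R_0}\sigma_\gamma\to0$ as $\gamma\to0$ for one fixed $R_0$. That is exactly the uniformity you defer to your last paragraph, and it is not proved there. Your form of Simpson's identity, and a fortiori its logarithmic form, also drops the only coercive term available near $\infty$, the Higgs part $|[\varphi_0,H_\gamma]H_\gamma^{-1/2}|^2$. Even that term gives no mass to the component of $H_\gamma$ commuting with $\varphi_0$. For comparison, the paper linearizes via $h_\gamma=h_0e^{\eta_\gamma}$ and bootstraps, taking a uniform $L^\infty$ bound on $h_\gamma$ for granted; your Step 3 is an attempt to prove precisely that bound.

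The premise you intend to use for the uniformity is also false: $h_\gamma$ and $h_0$ are not exponentially close to a common model. Near $\infty$, each harmonic metric is, up to exponentially small error, a decoupled flat metric making the eigenlines of its \emph{own} Higgs field orthogonal. For $\gamma=0$ this is the factor $\mathrm{diag}(|P_0/Q_0|^{1/2},|Q_0/P_0|^{1/2})$ in $h_{\omega,0}$, which makes $\sqrt{P_0}\,e_1\pm\sqrt{Q_0}\,e_2$ orthogonal; here $e_1,e_2$ is the frame in which the Higgs fields are written. For $\gamma\neq0$ the eigenvectors are $(\pm\lambda+\gamma)Q_\gamma^{-1/2}e_1+Q_\gamma^{1/2}e_2$, with $\lambda^2=z^3+u$. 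Comparing the two decoupled metrics shows that $H_\gamma-\mathrm{Id}$ contains, in the eigenframe of $\varphi_0$, an off-diagonal term of order $|\omega_\gamma-\omega_0|\,|z|^{-1}$, besides terms of order $|\gamma|\,|z|^{-3/2}$. Here $\omega_\gamma^3=\gamma^2-u$, so $\omega_\gamma-\omega_0\approx\gamma^2/(3\omega_0^2)\neq0$ for $u\neq0$. Consequently the $\gamma$-dependence of the gauge in \eqref{good Higgs fields form} is not absorbed, and $\sigma_\gamma$ decays only polynomially. In your fourth step, $H_\gamma-\mathrm{Id}$ is then not even in $L^2$ near $\infty$, so convergence can only be claimed on compact sets or in weighted norms. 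A workable route is to build an approximate metric, depending smoothly on $\gamma$, that equals the decoupled model of $\varphi_\gamma$ for $|z|\ge R_0$ and $h_0$ inside. Then either invert the linearization $\Delta_{A_0}-M_{\varphi_0}$, with $M_{\varphi_0}\eta=[\varphi_0^{*},[\varphi_0,\eta]]+[\varphi_0,[\varphi_0^{*},\eta]]$, on weighted spaces and apply the implicit function theorem. Or derive uniform bounds relative to this approximate metric and conclude by compactness and the uniqueness in Theorem \ref{Existence of Harmonic Metric}.
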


\begin{proof}
  Fix a gauge so that all Hermitian structure is of determinant 1 and harmonic metric for each $\gamma$ is unique. We consider the map

  \begin{equation}
    \mathcal{H}: A^{1,0}(\mathfrak{sl}(E) \oplus A^0(SU(E)) \to A^2(\mathfrak{su}(E)) \oplus A^{1,1}(\mathfrak{sl}(E))
  \end{equation}

  \noindent defined by

  \begin{equation}
    \mathcal{H}(\varphi,h) := F_{h} + [\varphi,\;\varphi^{*_h}],
    \label{right side of Hitchin}
  \end{equation}

  \noindent where $F_h$ is the curvature uniquely determined by $h$. Clearly, $\mathcal{H}(\varphi_\gamma,h_\gamma)=0$ for all $\gamma \in \mathbb{C}$. For each $\gamma$, we have

  \begin{equation}
    h_\gamma = h_0 e^{\eta_\gamma}
    \label{self adjoint infinitesimal gauge}
  \end{equation}

  \noindent for some $\mathfrak{su}(E)$-valued section $\eta_\gamma$. Plugging into \eqref{right side of Hitchin}, we obtain

  \begin{eqnarray}
    \mathcal{H}((\varphi_\gamma,h_\gamma)) &=& F_{h_\gamma} + [\varphi_\gamma\;,\;\varphi_\gamma^{*_{h_\gamma}}] \nonumber \\
    &=& F_{h_0} + \p_{h_0}\bar{\p}_{h_0} \eta_\gamma + o(|\eta_\gamma|^2)+[\varphi_\gamma,\;\varphi_\gamma^{*_{h_\gamma}}]  \nonumber \\
    &=&  \p_{h_0}\bar{\p}_{h_0} \eta_\gamma + [\varphi_\gamma \; ,\; \varphi_\gamma^{*_{h_\gamma}}] - [\varphi_0 \; , \; \varphi_0^{*_{h_0}}] + o(|\eta_\gamma|^2)\nonumber \\
    &=&0.
    \label{elliptic equation for eta}
  \end{eqnarray}

\noindent The infinitesimal variation $\eta_\gamma$ satisfies an elliptic PDE and therefore smoothly depend on the coefficients. Therefore, entries of $h_{\omega,\gamma}$ depend smoothly on $\gamma$ since $\varphi_\gamma$'s do. Moreover, every harmonic metric is smooth on $\mathbb{CP}^1$. For $\gamma$'s on a bounded set, $\|h_\gamma\|_{L^\infty}$ are uniformly bounded. Since the harmonic metrics are adapted to the filtration at $\infty$, $\varphi_\gamma$'s are uniformly bounded in $L^\infty$. Moreover, $\varphi_\gamma \to \varphi_0$ smoothly from their definitions.  Elliptic regularities, along with bootstrapping \eqref{elliptic equation for eta}, then show that $\eta_\gamma \to 0$ in all Sobolev norms, and the proof is complete.
\end{proof}

\section{The regulated $L^2$ norm}
We now present our partial generalizations of the regulated $L^2$ norms on $\mathcal{M}_{2,3}$. The generalization is quite natural on $\Ms$. However, the generalization we have defined on $\Mb$ is still somewhat vague. Nevertheless, the regulated $L^2$-norm we have defined on the whole moduli space depend continuously on the parameter $u,\gamma,$ and $\omega$.

In \cite{FN}, the authors defined the formula

\begin{equation}
  \mu([\mathcal{E},\varphi]) := \frac{i}{\pi} \int Tr \left(\varphi \wedge \varphi^{*_h}- Id |z|^3\right)\;dzd\bar{z}
  \label{regulated L 2 norm on fixed point}
\end{equation}

\noindent and explicitly compute them on the two fixed points over the central fiber $u=0$.

We note that the second term in the integrand is placed to remove the non-integrable terms on the trace. The integral is still divergent away from the central fiber. On $\Ms$, we define, over the fiber $u \in \mathbb{C}$,

\begin{equation}
    \mu([\mathcal{E}_u,\varphi_u]) := \frac{i}{\pi} \int Tr \left(\varphi_u \wedge \varphi_u^{*_h}- Id |z^3+u|\right)\;dzd\bar{z}.
  \label{regulated L 2 norm on M small}
\end{equation}

\noindent Explicit computations of this integral is difficult. It is, however, convergent and smooth for every $u \in \mathbb{C}$.

\begin{theorem}
  The integral \eqref{regulated L 2 norm on M small} is convergent for every $u \in \mathbb{C}$. Moreover, it is smooth in $u$.
\end{theorem}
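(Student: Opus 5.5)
Our plan is to compute the integrand explicitly using the adjoint formula \eqref{adjoint Higgs field small} from the proof of Theorem \ref{individual solution of each u}. With $\varphi_u=\begin{pmatrix}0&z^3+u\\1&0\end{pmatrix}dz$ and $h_u$ as in \eqref{harmonic metric on Msmall}, the pointwise trace $\mathrm{Tr}(\varphi_u\wedge\varphi_u^{*_h})$ reduces, up to the factor $dz\,d\bar z$ and a sign, to
\[
|z^3+u|\,e^{2\psi_u}+|z^3+u|\,e^{-2\psi_u} \;=\; 2|z^3+u|\cosh(2\psi_u).
\]
Subtracting the regulating term, the integrand of \eqref{regulated L 2 norm on M small} becomes a constant multiple of
\[
2|z^3+u|\bigl(\cosh(2\psi_u)-1\bigr)\;=\;4|z^3+u|\sinh^2(\psi_u).
\]
This is the form in which we will study both convergence and smoothness.

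For convergence we split $\mathbb{C}$ into a large disk $D_R$ and its complement. On $D_R$, the point to watch is $z=-u^{1/3}$. There $\psi_u\sim\log|z^3+u|$, so $|z^3+u|\sinh^2\psi_u$ grows at most like $|z^3+u|^{-1}$, which is not obviously integrable in two real dimensions. Rather than rely on this asymptotic, we will use the a priori smoothness of $h_u$ from Theorem \ref{Existence of Harmonic Metric}. Since $\mathrm{Tr}(\varphi_u\wedge\varphi_u^{*_h})$ is built from smooth $h_u$ and holomorphic $\varphi_u$, it is continuous, hence bounded on $D_R$. Outside $D_R$ we use the exponential decay from Theorem \ref{individual solution of each u}: $\sinh^2\psi_u\le Ce^{-c|z|}$, while $|z^3+u|$ grows only polynomially. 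The tail is therefore integrable, and $\mu$ is finite for each $u$.

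For smoothness in $u$ we differentiate under the integral sign. This requires three ingredients.
\begin{enumerate}
\item Smooth dependence of $\psi_u$ on $u$. We obtain it by linearizing \eqref{elliptic equation for Msmall}, as in the proof of Proposition \ref{convergence of harmonic metrics}. The linearized operator $\Delta-4|z^3+u|\cosh(2\psi_u)$ is invertible on decaying functions: its zeroth-order coefficient is strictly negative, so the maximum principle gives injectivity. The implicit function theorem in weighted Sobolev spaces then yields smoothness of $u\mapsto\psi_u$.
\item Local uniform bounds. Lemma \ref{boundedness of norms} and the Schauder estimate \eqref{Schauder estimate M small} bound $\psi_u$ and its derivatives uniformly for $u$ in a compact set.
\item A uniform exponential tail. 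For $u$ in a bounded set, the comparison-function argument gives decay with constants independent of $u$. The same argument applies to the $u$-derivatives $\partial_u^k\psi_u$, since they satisfy linear equations of the form $(\Delta-V_u)\,\partial_u^k\psi_u=f_u$ with $V_u>0$ and forcing terms $f_u$ that themselves decay.
\end{enumerate}
Dominated convergence then gives $\mu\in C^\infty$.

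The main obstacle is the moving singularity at $-u^{1/3}$. The diagonal gauge \eqref{harmonic metric on Msmall} degenerates there, and the map $u\mapsto u^{1/3}$ is not smooth at $u=0$. To handle this, we will work near the singular point with the global smooth harmonic metric $h_u$ itself, not with $\psi_u$, since the trace is gauge-invariant. Smoothness of $h_u$ in $u$ should follow from the elliptic dependence argument of Proposition \ref{convergence of harmonic metrics}. The most delicate part is establishing uniform exponential decay in $u$ near $u=0$, because the comparison function $F_u$ used earlier degenerates as $u\to0$. We would instead compare with the radial decay known at $u=0$ from \cite{FN}, using the lower bound $|z^3+u|\ge |z|^3/2$ for $|z|$ large.
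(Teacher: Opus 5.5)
\textbf{Convergence.} This part of your argument is correct, and it is more robust than the paper's. The paper treats the zeros of $z^3+u$ through the asymptotics of $\psi_u$. This produces singular terms $|z^3+u|^{-1}$, which are integrable only because the three roots are simple, so the argument says nothing at $u=0$. You instead use that $\mathrm{Tr}(\varphi_u\varphi_u^{*_h})$ is continuous, since $h_u$ is smooth in the holomorphic frame. The integrand is therefore bounded on every disc, for every $u$. Indeed, smoothness of $h_u$ forces $\psi_u=-\tfrac12\log|z^3+u|+O(1)$ near a root, rather than the $\log|z^3+u|$ behaviour the paper uses, and then $|z^3+u|\sinh^2\psi_u$ is bounded.

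\textbf{Smoothness: where the argument fails.} The gap is the last step, ``dominated convergence then gives $\mu\in C^\infty$''. The obstruction is not $h_u$ but the regulator. The integrand contains $-2|z^3+u|$, which is not differentiable in $u$ where $z^3=-u$, and its $k$-th $u$-derivatives have size $|z^3+u|^{1-k}$. Passing to $h_u$ and using gauge invariance of the trace does nothing to this term.

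For $u\neq0$ the problem is repairable. Derivatives of order $\ge 3$ are only bounded by the non-integrable $|z-z_j|^{-2}$. However, translating $z\mapsto z-z_j(u)$ near each simple root gives $|z^3+u|=|z-z_j(u)|\,|q_j(z,u)|$ with $q_j$ holomorphic and non-vanishing, and after this change of variables the dependence on $u$ is smooth.

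At $u=0$ no repair is possible. Since $\Delta_u|z^3+u|=|z^3+u|^{-1}$, the substitution $z=|u|^{1/3}\zeta$ gives
\[
\Delta_u\int_{|z|<R}|z^3+u|\,dA=\int_{|z|<R}\frac{dA}{|z^3+u|}\sim |u|^{-1/3}\int_{\mathbb{C}}\frac{dA(\zeta)}{|\zeta^3+1|}\longrightarrow\infty\quad (u\to 0).
\]
Equivalently, $\int_{|z|<R}|z^3+u|\,dA=\mathrm{const}+c\,|u|^{5/3}+(\text{smooth})$ with $c>0$.

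Grant your steps (1)--(3). They are best run in $w_u=\psi_u+\tfrac12\log|z^3+u|$, i.e.\ $h_u=\mathrm{diag}(e^{w_u},e^{-w_u})$. This solves $\Delta w=e^{2w}-|z^3+u|^2e^{-2w}$, whose coefficients are polynomial in $u,\bar u$, and the linearised potential $4|z^3+u|\cosh 2\psi_u$ is smooth and positive. Then the trace term on $|z|<R$ and the tail on $|z|>R$ are smooth in $u$, so nothing cancels the singularity above. Hence $\mu$ is $C^1$ but not $C^2$ at $u=0$.

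The $C^\infty$ conclusion therefore cannot be reached by your route. Nor can it be reached by the paper's, which simply asserts that the integrand is ``clearly smooth in $u$''. What your argument, with the change-of-variables fix, actually proves is smoothness for $u\neq 0$ and $C^1$ regularity at $u=0$.
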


\begin{proof}

Since the integrand is clearly smooth in $u$, it is sufficient to show the convergence of the integrals. We re-write

\begin{equation}
  \mu([\mathcal{E},\varphi]) := \frac{i}{\pi} = \int 4\sinh^2(\psi_u)|z^3+u|\;dzd\bar{z}.
  \label{regulated L 2 norm on M small alternative}
\end{equation}

The exponential decay of $\psi_u$ dominates $|z^3+u|$ at large $|z|$ and the remaining concern is the singularities of $\psi_u$ at $z^3=-u$. Near those points, boundary conditions stated in \ref{individual solution of each u} provide asymptotic behavior

\begin{equation}
  \sinh^2(\psi_u) \sim \left(|z^3+u| - \frac{1}{|z^3+u|}\right)^2.
  \label{asymptotic behaviors}
\end{equation}

\noindent The only singular term for small $|z|$ is then $\frac{1}{|z^3+u|}$. For $u \neq 0$, $z^3+u$ has three distinct roots, and therefore the singular term consists of three simple poles. Two dimensional integrations over domain near simple poles are finite, and therefore the integral \eqref{regulated L 2 norm on M small alternative} converges.
\end{proof}

Strictly speaking, the regulated $L^2$-norms defined above and in \cite{FN} are not ordinary norms. The definition, for example, is not homogeneous. They are however still closely associated to $\mathbb{S}^1$-moment maps descriptions from \cite{H}. These regulated $L^2$-norms are more precisely norms on the smooth part of the Higgs fields. The adjusted terms in the integrals are defined to remove the singular parts. In fact, for $\mu$ defined on $\Ms$ in \eqref{regulated L 2 norm on M small}, consider the trivialization away from $z=-u^{\frac{1}{3}}$ given by the gauge

\begin{equation}
  g= -\frac{1}{2} \begin{pmatrix}
                    (z^3+u)^{-\frac{1}{2}} & -1 \\
                    -1 & -(z^3+u)^{\frac{1}{2}}.
                  \end{pmatrix}
                  \label{trivializaing gauge for phi model}
\end{equation}

\noindent This gauge is in fact the required gauge for $[\mathcal{E},\varphi_u]$ to be a {\em good pair} as described in \cite{FN}. In this frame, the integrand of regulated norm \eqref{regulated L 2 norm on M small} vanishes. Therefore,

\begin{equation}
    \mu([\mathcal{E}_u,\varphi_u]) := \frac{i}{\pi} \int_{D_u} Tr \left(\varphi_u \wedge \varphi_u^{*_h}- Id |z^3+u|\right)\;dzd\bar{z},
  \label{regulated L 2 norm on M small near singularity}
\end{equation}

\noindent where $D_u$ is a small disc around $-u^{\frac{1}{3}}$. Near the zero of determinant, we observe that

\begin{equation}
  Tr \left(\varphi_u \wedge \varphi_u^{*_h}- Id |z^3+u|\right) = Tr \left(\hat{\varphi}_u \wedge \hat{\varphi}_u^{*_h}\right),
  \label{smooth part of Higgs field}
\end{equation}

\noindent where

\begin{equation}
  \hat{\varphi}_u = \begin{pmatrix}
                        0 & (z^3+u)(e^{2\psi_u}-1)^{\frac{1}{2}}e^{\psi_u} \\
                        (e^{-2\psi_u}-1)^{-\frac{1}{2}}e^{-\psi_u} & 0
                      \end{pmatrix}.
                      \label{Higgs field near zero}
\end{equation}

\noindent This is a smooth Higgs field on $D_u$ due to the boundary conditions of $\psi_u$ stated in Theorem \ref{individual solution of each u}. Let $\tilde{\varphi}_u$ be the smooth Higgs field formed by patching $\hat{\varphi}_u$ and $\varphi_u$ with appropriate partition of unity, we conclude that the regulated $L^2$ norm on $\Ms$ is

\begin{equation}
  \mu([\mathcal{E}_u,\varphi_u]) := \frac{i}{\pi} \int Tr\left(\tilde{\varphi}_u \wedge \tilde{\varphi}_u^{*_h}\right)\;dzd\bar{z}
  \label{L 2 norm as moment map}
\end{equation}

\noindent with the usual moment map 

\begin{equation}
\mathcal{M}(\tilde{\varphi}) =[\tilde{\varphi}_u,\tilde{\varphi}_u^{*_h}].
\label{moment map}
\end{equation}

The generalization to $\Mb$ is much trickier except at $\gamma=0$. There, the harmonic equation is identical and \eqref{regulated L 2 norm on M small} continues to be defined. For $\gamma \neq 0$, the harmonic metric is no longer diagonal, and it is impossible to turn Hitchin equation into a scalar PDE. We here provide a rather implicit potential formula.

For

\begin{equation}
  \varphi_\gamma = \begin{pmatrix}
                              \gamma & P_\gamma \\
                              Q_\gamma & -\gamma
                            \end{pmatrix},
                            \label{Higgs field gamma again}
\end{equation}

\noindent above, we write the corresponding harmonic metric as

\begin{equation}
  h_\gamma =
  \begin{pmatrix}
    f_{1,\gamma} & g_\gamma \\
    \bar{g}_\gamma & f_{2,\gamma},
  \end{pmatrix}
  \label{harmonic metric at nonzero gamma}
\end{equation}

\noindent with the gauge fixing condition that $\det h_\gamma =1$.

The regulated $L^2$ norm we define on $\Mb$ is

\begin{eqnarray}
      \mu([\mathcal{E}_\gamma,\varphi_\gamma]) &:=& \frac{i}{\pi} \int [Tr \left(\varphi_\gamma \wedge \varphi_u^{*_{h_\gamma}}- 2|P_\gamma Q_\gamma | Id\right) -E_\gamma ]\;dzd\bar{z} \nonumber \\
      &:=& \frac{i}{\pi} \int [Tr \left(\varphi_\gamma \wedge \varphi_u^{*_{h_\gamma}}- 2|z^3+u-\gamma^2 | Id\right) -E_\gamma ]\;dzd\bar{z}.
      \label{regulated L 2 metric on M big}
\end{eqnarray}

\noindent The error term $E_\gamma$ serves similar role to those norms previously defined: to remove the divergent contribution for integrals. After direct computations, it is

\begin{eqnarray}
  E_\gamma = &\;& 2|\gamma|^2 f_{1,\gamma}f_{2,\gamma} + |P_\gamma|^2 f_{2,\gamma}^2 + |Q_\gamma|^2 f_{1,\gamma}^2 \nonumber \\
             &+& 4 Re(\gamma \bar{P}_\gamma f_{2,\gamma}g_\gamma - \gamma \bar{Q}_\gamma f_{1,\gamma}\bar{g}_\gamma - \bar{P}_\gamma Q_\gamma g_\gamma^2) \nonumber \\
             &-& 2|z^3+u-\gamma^2|\cosh(2\psi_0).
             \label{error term in L 2 norm}
\end{eqnarray}

In another words, we have,

\begin{equation}
      \mu([\mathcal{E}_\gamma,\varphi_\gamma]) := \frac{i}{\pi} \int \left(2|\gamma|^2|g_\gamma|^2 +2 |z^3+u-\gamma^2|\sinh^2(\psi_0)\right)\;dzd\bar{z}.
      \label{regulated L 2 metric on M big alternative}
\end{equation}

The integral clearly agrees with the regulated $L^2$ norm of $[\mathcal{E}_0,\varphi_0]$. With the smooth convergence of harmonic metrics from Proposition \ref{convergence of harmonic metrics}, the regulated $L^2$ norms defined above are smooth in $\gamma$ (and therefore $\omega$ as well) on $\Mb$.

To conclude, we have extended the regulated $L^2$ norms from \cite{FN} to the two ends, $\gamma=0,\infty$, of each fiber. Moreover, these regulated $L^2$ norms are essentially the regular $L^2$ norms on the smooth parts of the Higgs fields by removing the singular part at $\infty$ with $\mathbb{S}^1$ moment maps defined in \cite{H}. We however eagerly hope to improve, or adjust, the definition above so we have more insight information for the geometric structure of the toric fibers of $\mathcal{M}_{2,3}$.

\end{onehalfspacing}

\end{document}